% 25/01/12
% 16/01/12
\documentclass[11pt,draft,reqno]{article}
\usepackage{amssymb}
\usepackage{amsmath}
\usepackage{amsthm}
\usepackage{palatino}

\textwidth=17cm
\textheight=21cm
\evensidemargin=0pt
\oddsidemargin=0pt

\newtheorem{theorem}{Theorem}[section]
\newtheorem{lemma}[theorem]{Lemma}

\newtheorem{corollary}[theorem]{Corollary}
\theoremstyle{definition}

\theoremstyle{remark}

\numberwithin{equation}{section}
\setcounter{page}{1}
%%%%% End of setup (Do not change) %%%%%%%%%%%%%%%%%%%%%%%%%%%%%%%%%%%%%%%%%%%%

%\renewcommand{\baselinestretch}{1.5}

\begin{document}

%\title[A strong uniform approximation  of sub-fractional Brownian motion]
%{A STRONG UNIFORM APPROXIMATION OF SUB-FRACCIONAL BROWNIAN MOTION BY MEANS OF T%RANSPORT PROCESSES}
%\author{JOHANNA GARZ\'ON }
%\email{johanna@math.cinvestav.mx}
%\address{Department of Mathematics, CINVESTAV, Mexico City, Mexico}
%\author{LUIS G. GOROSTIZA }
%\email{lgorosti@math.cinvestav.mx}
%\address{Department of Mathematics, CINVESTAV, Mexico City, Mexico}
%\author{JORGE A. LE\'ON }
%\address{Department of Automatic Control, CINVESTAV, Mexico City, Mexico}
%\email{jleon@ctrl.cinvestav.mx}
%\thanks{* Research partially supported by CONACYT grant 45684-F}

%\urladdr{http://www.math.univ.edu/$\sim$johndoe \bf(optional)}

%\subjclass[2000]{ Primary 60F15; Secondary 60G15, 60G18.}
%\keywords{ Fractional Brownian motion, transport processes, almost sure convergence, rate of convergence. }

%\begin{abstract}
%We construct a sequence of processes which converges strongly to fractional Brownian motion uniformly on bounded intervals for any Hurst parameter $H$, and we derive a rate of convergence, which is best for $H$ near to $1/2$. The construction is based on the Mandelbrot-van Ness stochastic integral representation of fractional Brownian motion and on a strong transport process approximation of Brownian motion. The objective of this method  is to facilitate simulation.
%\end{abstract}

%\maketitle
%\allowdisplaybreaks
%\section{Introduction}

\begin{center}
{\bf A STRONG  APPROXIMATION OF SUB-FRACTIONAL BROWNIAN MOTION\\
 
BY MEANS OF TRANSPORT PROCESSES}
\\[1cm]
Johanna Garz\'on, Luis G. Gorostiza and Jorge A. Le\'on
\end{center}
\vglue 1cm
\noindent
{\bf Abstract.} Sub-fractional Brownian motion is a process analogous to fractional Brownian motion but without stationary increments. In \cite{GGL1} we proved a strong uniform approximation with a rate of convergence for fractional Brownian motion by means of transport processes. In this paper we prove a similar type of approximation for sub-fractional Brownian motion.
\vglue .5cm
\noindent
{\bf 1. Introduction}
\setcounter{section}{1}
\setcounter{equation}{0}
\vglue .25cm
Fractional Brownian motion (fBm) is well known and used in many areas of application (see \cite{N,ST} for background, and \cite{DOT} for some applications). It is a centered Gaussian process $W=(W(t))_{t\geq 0}$ with covariance function
$$
E(W(s)W(t))=\frac{1}{2}(s^{2H} +t^{2H}-|s-t|^{2H}),\quad s, t\geq 0,
$$
where $H\in (0, 1/2) \cup (1/2,1)$  (the case $H=1/2$ corresponds to ordinary Brownian motion). $H$ is called Hurst parameter. The main properties of fBm  are that it is a continuous centered Gaussian process which is self-similar, has stationary increments with long range dependence, and is neither a Markov process nor a semimartingale. Since it is not a semimartingale, it has been necessary to develop new theories of stochastic calculus for fBm, different from the classical It\^o calculus (see e.g. \cite{BHOZ, Mi, N, NT} and references therein). 

Sub-fractional Brownian motion (sfBm) is a process $S=(S(t))_{t\geq 0}$ that has the main properties of   fBm except  stationary increments, and its long range dependence decays  faster  than that of fBm. Its covariance function is
$$
E(S(s)S(t))=s^{2H}+t^{2H}-\frac{1}{2}\left[(s+t)^{2H}+|s-t|^{2H}\right],\quad s, t\geq 0,
$$
with parameter $H\in (0,1/2)\cup (1/2, 1)$ (the case $H=1/2$ also corresponds to ordinary Brownian motion). The main properties of sfBm were studied in \cite{BGT1}, where it was also shown that it arises from the occupation time fluctuation limit of a branching particle system with $H$ restricted to $(1/2,1)$. This process  appeared independently in a different context in \cite{DZ}.

The emergence of sfBm has motivated a series of papers where it arises in 
connection with several analogous but somewhat different branching 
particle systems, usually with $H\in (1/2, 1)$. It has been shown in 
\cite{BGT3} that it also comes out in a more natural way from a particle 
system without branching, and in \cite{BT} there is a different particle 
picture approach that yields sfBm with the full range of parameters $H\in 
(0,1)$. Other long range dependent Gaussian processes have been obtained 
which are related to particle systems. A reader interested in fBm and sfBm 
in connection with particle systems can find some results and references 
in \cite{BGT1, BGT2, LX}.

Some authors have studied further properties of sfBm for its own sake and  related stochastic calculus, and possible applications of sfBm have been proposed (see  \cite{BB, EN, HN, LLY, LYPW, LY, M, No, RT, SCY, SY, SZ, Sw, T1, T2, T3, T4, T5, T6, YS,  YSH}).

There are various ways of approximating fBm in distribution that can be used for simulation of paths. In \cite{GGL1} we obtained a strong approximation of fBm  with a rate of convergence by means of the Mandelbrot-van Ness representation of fBm and  a strong  approximation of Brownian motion with transport processes proved in \cite{GG}. This was  employed in \cite{GGL2} for a strong approximation of solutions of fractional stochastic differential equations with a rate of convergence, which may be used for simulation of solutions  (computational efficiency was not the objective). A strong approximation of the Rosenblatt process by means of transport processes with a rate of convergence has been obtained in \cite{GTT}.

Since sfBm has attracted interest recently, it seems worthwhile to provide a strong approximation for it by means of transport processes with a rate of convergence, analogously as was done for fBm in \cite{GGL1}. This can be achieved using the same approach of \cite{GGL1} with some technical  modifications and additional work. The aim of the present paper is to prove such a strong  approximation for sfBm, which moreover has the same rate of convergence as that of the transport approximation of fBm. The result is given in Corollary 2.3.

We end the Introduction by recalling the strong transport approximation of Brownian motion. For each $n=1,2,\ldots$, let $(Z^{(n)}(t))_{t\geq 0}$ be a process such that $Z^{(n)}(t)$ is the position on the real line at time $t$ of a particle moving as follows. It starts from $0$ with constant velocity $+n$ or $-n$, each with probability $1/2$. It continues for a random time $\tau_1$ which is exponentially distributed with parameter $n^2$, and at that time it switches from velocity $\pm n$ to $\mp n$ and continues that way for an additional independent random time $\tau_2-\tau_1$, which is again exponentially distributed with parameter $n^2$. At time $\tau_2$ it changes velocity as before, and so on. This process is called a (uniform) transport process.

\begin{theorem}
\label{T1.1} \cite{GG} There exist versions on the transport process 
$(Z^{(n)}(t))_{t\geq 0}$ on the same probability space as a Brownian motion  
$(B(t))_{t\geq 0}$ such that for each $q>0$, 
$$
P\biggl(\sup_{a\leq t \leq b}|B(t)-Z^{(n)}(t) |>Cn^{-1/2}(\log n)^{5/2}\biggr) =o (n^{-q}) \quad {\it as}\quad n\rightarrow \infty,
$$
where $C$ is a positive constant depending on $a,b$ and $q$.
\end{theorem}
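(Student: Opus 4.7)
The plan is to couple the transport process $Z^{(n)}$ with a Brownian motion $B$ on a single probability space by Skorokhod-embedding the random walk of values at switching times. Let $0=\tau_0<\tau_1<\tau_2<\cdots$ be the switching times of $Z^{(n)}$; then on $[\tau_{k-1},\tau_k]$ the process moves linearly at velocity $\pm n$, so $|Z^{(n)}(\tau_k)-Z^{(n)}(\tau_{k-1})|=n(\tau_k-\tau_{k-1})$ are i.i.d.\ Exp$(n)$ variables whose signs alternate in a deterministic pattern starting from an equi-probable initial choice $\pm 1$. Grouping consecutive pairs produces a random walk $S_k$ with i.i.d.\ centered increments of variance $2/n^2$ and a (scaled) Laplace distribution, which has all exponential moments. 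Using the classical Skorokhod representation, realize $S_k=B(T_{2k})$ for stopping times $T_{2k}$ of $B$ with $E[T_{2k}]=2k/n^2$ and Gaussian-type tails on $T_{2k}-2k/n^2$.

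Given this joint realization, the supremum $|B(t)-Z^{(n)}(t)|$ on $[a,b]$ is controlled by three pieces: (i) the mismatch between the Skorokhod clock $T_{2k}$ and the intrinsic clock $\tau_{2k}$, both being sums of i.i.d.\ nonnegative variables with the same mean; (ii) the Brownian fluctuation $|B(t)-B(T_{2k(t)})|$ on the time-scale where $T_{2k(t)}\approx t$; and (iii) the deterministic displacement of $Z^{(n)}$ between switch times, which is at most $n(\tau_k-\tau_{k-1})$. For (i), exponential moments of Skorokhod stopping times for Laplace-type distributions, combined with Bernstein-type inequalities, yield $\sup_{k\le n^2b}|T_{2k}-\tau_{2k}|=O(n^{-2}\sqrt{n^2\log n})=O(n^{-1}\sqrt{\log n})$ outside an event of probability $o(n^{-q})$. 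For (ii) one invokes the L\'evy modulus of continuity for $B$, giving $|B(t)-B(s)|=O(\sqrt{|t-s|\log(1/|t-s|)})$, which on intervals of length $O(n^{-1}\sqrt{\log n})$ is $O(n^{-1/2}(\log n)^{3/4})$. For (iii), the maximum spacing between switch times on $[0,b]$ is $O(n^{-2}\log n)$ by Poisson extremes, so the within-interval drift of $Z^{(n)}$ is $O(n^{-1}\log n)$.

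The main obstacle is obtaining the sharp rate $n^{-1/2}(\log n)^{5/2}$ and, crucially, an $o(n^{-q})$ tail for every $q>0$. The former requires chaining the estimates above with care: the Skorokhod stopping-time deviation contributes one power of $\sqrt{\log n}$, the modulus of continuity of $B$ contributes another, and the union bound over the $O(n^2)$ switching events together with the amplification needed to cover the full interval $[a,b]$ (rather than its endpoints) contributes the remaining logarithmic factors. The $o(n^{-q})$ tail forces every tail estimate to be of stretched-exponential type, which is why one must use the full exponential-moment structure of the Laplace-distributed increments rather than just their second moments, and why one cannot simply rely on CLT-level inputs.
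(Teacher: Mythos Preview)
The paper does not prove Theorem~1.1. The result is quoted from \cite{GG} (Gorostiza--Griego, \textit{Stochastics}, 1980) and invoked as a black box; the only accompanying text is ``See \cite{GG,GGL1} for background and references.'' There is therefore no proof in this paper against which to compare your proposal.

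As a stand-alone sketch your Skorokhod-embedding strategy is a reasonable route to a strong approximation of this type, and the key structural point---that after pairing, the embedding-time increments $T_{2j}-T_{2j-2}$ and the intrinsic holding times $\tau_{2j}-\tau_{2j-2}$ can be realized on one space so that their differences form an i.i.d.\ centered sequence with exponential moments, making a Bernstein bound available---is correct. Two items would need to be made precise before this becomes a proof. First, you must specify an embedding of the Laplace law whose stopping time has exponential moments (this holds, e.g., for the Az\'ema--Yor embedding, but is not automatic for an arbitrary Skorokhod construction); without it the $o(n^{-q})$-for-every-$q$ tail does not follow. Second, your tally of logarithmic powers is heuristic: the three ingredients you list (clock deviation of order $n^{-1}\sqrt{\log n}$, L\'evy's modulus of continuity, and Poisson extremes for the maximal holding time) combine to an error of order roughly $n^{-1/2}(\log n)^{3/4}$, which is \emph{stronger} than the stated $n^{-1/2}(\log n)^{5/2}$, so either the exponent $5/2$ in \cite{GG} is not sharp or there are losses in your argument you have not identified; in either case the bookkeeping should be written out explicitly rather than attributed to ``union bounds and amplification.''
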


See \cite{GG,GGL1} for background and  references.

\vglue 1cm
\noindent
{\bf 2. Approximation}
%\section{Approximation}
\label{S2}
\setcounter{section}{2}
\setcounter{equation}{0}
\setcounter{theorem}{0}
\vglue .5cm
A stochastic integral representation of sfBm $S$ with parameter $H$ is given by
\begin{equation}
\label{eqdefS}
    S(t)= C\int_{-\infty}^{\infty}[\left((t-s)^+\right)^{H-1/2}+((t+s)^-)^{H-1/2} - 2((-s)^+)^{H-1/2}]dB(s),
\end{equation}
where $C$ is a positive constant depending on $H$, and $B=(B(t))_{t\in \mathbb{R}}$ is Brownian motion on the whole real line (see \cite{BGT1}). Rewriting (\ref{eqdefS}), we have
\begin{equation}
\label{eqrepS1}
    S(t)= W(t) + Y(t),
\end{equation}
where $W$ is a  fBm with Hurst parameter $H$ and Mandelbrot-van Ness representation  
\begin{equation}
\label{eqdefW}
W(t)= C\biggl\{\int_{-\infty}^0[\left(t-s\right)^{H-1/2}-(-s)^{H-1/2}]dB(s)+ \int_0^t (t-s)^{H-1/2}dB(s)\biggr\},
\end{equation}
and the process $Y$ is defined by
\begin{equation}
\label{eqdefY}
Y(t)= C\biggl\{\int_{-\infty}^{-t}[\left(-t-s\right)^{H-1/2}-(-s)^{H-1/2}]dB(s)- \int_{-t}^0 (-s)^{H-1/2}dB(s)\biggr\}.
\end{equation}

Due to (\ref{eqrepS1})-(\ref{eqdefY}), the processes $S$ and $Y$ have common properties in general, in particular the same H\"older continuity.

We fix $T>0$ and $a<-T$, and we consider the following Brownian motions  constructed from $B$:
\vglue .25cm
\noindent
(1) $\left(B_1(s)\right)_{0 \leq s \leq T}$, the restriction of $B$ to the interval $\left[0, T\right]$.\\
(2) $\left(B_2(s)\right)_{a \leq s \leq 0}$, the restriction of $B$ to the interval $\left[a, 0\right]$.\\
(3)     $B_3(s)=
\begin{cases} sB(\frac{1}{s}) & \text{if}\  s\in 
\left[1/a, 0\right),\\
    0& \text{if}\  s=0. 
\end{cases}$
\vglue .5cm
By Theorem \ref{T1.1} there are three transport processes 
$$
%\begin{equation}
%\label{edeftranspro1}
	(Z_1^{(n)}(s))_{0 \leq s \leq T}, \ (Z_2^{(n)}(s))_{a \leq s \leq 0}, \ \  \text{and} \ \ \ (Z_3^{(n)}(s))_{1/a \leq s \leq 0},
%\end{equation}
$$ 
such that  for each $q>0$,
\begin{equation}
\label{eq98}
    P\left(\sup_{b_i\leq t \leq c_i}|B_i(t) - Z_i^{(n)}(t)|> C^{(i)} n^{-1/2}(\log n )^{5/2}\right)= o(n^{-q}) \ \ \ \text{as}  \ \ n\to \infty,
\end{equation}
where $b_i, c_i$, $i=1,2,3$,  are the endpoints of the corresponding intervals, and each $C^{(i)}$ is a positive constant depending on $b_i$, $c_i$ and $q$. Note that $Z^{(n)}_2$ and  $Z^{(n)}_3$ are constructed going backwards in time.

We now proceed similarly as in \cite{GGL1}. We define the functions
%\begin{equation}
%\label{eqdeffunf}
$$
f_t(s)= (t-s)^{H-1/2}-(-s)^{H-1/2}\  \ \ \  \text{for} \ \ \  s< 0\leq t \leq T,
%\end{equation}
$$
%\begin{equation}
%\label{eqdeffung}
$$
g_t(s)=(t-s)^{H-1/2}	\ \ \ \text{for}\ \ \ 0<s < t \leq T,
%%
%\end{equation}
$$
and for $0<\beta<1/2$, we put
\begin{equation}
\label{eqdefepsilon}
\varepsilon_n=-n^{-\beta/|H-1/2|}.
\end{equation}

There are  different approximations of $W$   for $H> 1/2$ and  for $H<1/2$. We fix $0<\beta <1/2$. For $H>1/2$ we define the  process $W^{(n)}_{\beta}=\left(W^{(n)}_{\beta}(t)\right)_{ t \in [0, T]}$ by 
\begin{align*}
%\label{e75}
W^{(n)}_{\beta}(t)&=C_H\biggl\{\int_0^t g_t(s)dZ_1^{(n)}(s) + \int_{a}^0 f_t(s)dZ_2^{(n)}(s)+f_t(a)Z_2^{(n)}(a)\bigg.\notag\\
&\hspace{2.5cm}+\bigg.\int_{1/a}^{0}\biggl(-\int_{1/a}^{s\wedge {\varepsilon}_n}\partial_sf_t\biggl(\frac{1}{v}\biggr)\frac{1}{v^3}dv\biggr)dZ_3^{(n)}(s)\biggr\},
\end{align*} 
and for $H<1/2$ we define the process $\hat{W}^{(n)}_{\beta}=\left(\hat{W}^{(n)}_{\beta}(t)\right)_{t \in [0, T]}$ by
\begin{align*}
%\label{e76}
&\hat{W}^{(n)}_{\beta}(t)=C_H\biggl\{\int_0^{(t+\varepsilon_n)\vee 0} g_t(s)dZ_1^{(n)}(s) +  \int_{(t+\varepsilon_n)\vee 0}^t g_t(\varepsilon_n+s)dZ_1^{(n)}(s)\bigg.\notag\\
&+\int_a^{\varepsilon_n} f_t(s)dZ_2^{(n)}(s)+f_t(a)Z_2^{(n)}(a)+\bigg.\int_{1/a}^{0}\biggl(-\int_{1/a}^{s}\partial_sf_t\biggl(\frac{1}{v}\biggr)\frac{1}{v^3}dv\biggr)dZ_3^{(n)}(s)\biggr\}.
\end{align*} 
%\end{eqnarray*}
We write $W^{(n)}=(W^{(n)}(t))_{t\in [0, T]}$, where 
\begin{equation}
	\label{eqdefwn}
	W^{(n)}=\begin{cases}
	W^{(n)}_{\beta} & \text{if} \ \  H>1/2,\\
	\hat{W}^{(n)}_{\beta} & \text{if} \ \  H<1/2.\\
	\end{cases}
\end{equation}
Note that $W^{(n)}$ is defined on the same probability space as the Brownian motion $B$ in (\ref{eqdefW}), and recall that it depends on $\beta$ through (\ref{eqdefepsilon}).

The following theorem gives the convergence and the rate of convergence of  $W^{(n)}$  to $W$. 

\begin{theorem} 
\label{teoh1} \cite{GGL1} Let $H\neq 1/2$ and let $W$ and $W^{(n)}$ be the processes defined by (\ref{eqdefW}) and (\ref{eqdefwn}), respectively. Then for each $q>0$ and each $\beta$ such that $0<\left|H-1/2\right|< \beta < 1/2$, there is a constant $C>0$ such that  
\begin{equation*}
        P\left(\sup_{0\leq t \leq T}\left|W(t) - W^{(n)}(t)\right|> Cn^{-(1/2-\beta)}(\log n )^{5/2}\right)=o(n^{-q}) \ \ \text{as} \ n\to \infty.
\end{equation*}
\end{theorem}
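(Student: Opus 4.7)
The plan is to match each term of $W^{(n)}$ in \eqref{eqdefwn} with the corresponding piece of the Mandelbrot--van Ness representation \eqref{eqdefW} of $W$, decomposing $W(t) - W^{(n)}(t)$ into contributions from three regions: $[0,t]$, $[a,0]$, and $(-\infty,a]$. The first two pair naturally with the Brownian motions $B_1,B_2$ and their transport approximations $Z_1^{(n)},Z_2^{(n)}$. For the tail I exploit the time inversion $B_3(s)=sB(1/s)$, which is a Brownian motion on $[1/a,0]$; a Wiener integration by parts followed by the substitution $s=1/v$ rewrites $\int_{-\infty}^a f_t(s)\,dB(s)$ as the boundary term $f_t(a)B_2(a)$ plus a double integral on $[1/a,0]$ driven by $B_3$, whose transport analogue is precisely the last summand in \eqref{eqdefwn}.

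On each sub-integral I then apply Riemann--Stieltjes integration by parts. This is legitimate because every $Z_i^{(n)}$ is piecewise linear (hence of bounded variation) and because the kernels $f_t$, $g_t$, and $\partial_s f_t(1/\cdot)/(\cdot)^3$ are of bounded variation on the regularized intervals. Subtracting the Brownian and transport sides converts each piece into $\int(B_i-Z_i^{(n)})\,d\varphi$ plus explicit boundary contributions, and Theorem~\ref{T1.1} supplies $\sup|B_i-Z_i^{(n)}|=O(n^{-1/2}(\log n)^{5/2})$ off an event of probability $o(n^{-q})$; multiplying by the total variation of $\varphi$ on the regularized domain yields the desired bound uniformly in $t\in[0,T]$, modulo the singular contributions.

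The principal difficulty lies in the endpoint singularities of the kernels. One checks that $\partial_s f_t(1/v)/v^{3}\sim C t\,|v|^{-(H+1/2)}$ as $v\to 0^-$, which is non-integrable when $H>1/2$, and that $g_t(s)\sim(t-s)^{H-1/2}$ as $s\to t^-$ and $f_t(s)\sim C(H)|s|^{H-1/2}$ as $s\to 0^-$, both singular when $H<1/2$. This is the purpose of the cutoff $\varepsilon_n$: in the $H>1/2$ case the truncation $s\wedge\varepsilon_n$ removes a window of width $|\varepsilon_n|$ near the singular endpoint from the tail integrand, while in the $H<1/2$ case the shift $g_t(\cdot)\mapsto g_t(\varepsilon_n+\cdot)$ on $[(t+\varepsilon_n)\vee 0,t]$ together with the truncation of the $B_2$-integral at $\varepsilon_n$ plays the analogous regularizing role.

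The main obstacle is balancing the deterministic bias introduced by the $\varepsilon_n$-regularization against the amplified transport error on its complement. A Taylor-style estimate of the missing contribution shows that the bias is of order $|\varepsilon_n|^{\alpha}$ for a suitable $\alpha=\alpha(H)>0$, while the total variation of the regularized kernel on the remaining domain scales as $|\varepsilon_n|^{-|H-1/2|}$, so that the transport error is inflated to $n^{-1/2}(\log n)^{5/2}\cdot|\varepsilon_n|^{-|H-1/2|}$. The calibration $|\varepsilon_n|=n^{-\beta/|H-1/2|}$ in \eqref{eqdefepsilon} makes this inflated error equal to $n^{-(1/2-\beta)}(\log n)^{5/2}$, and the constraint $|H-1/2|<\beta<1/2$ is exactly what is needed for the deterministic bias to fit under the same rate, producing the stated bound.
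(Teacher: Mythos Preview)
The paper does not prove this theorem here; it is quoted from \cite{GGL1}. The present paper does, however, carry out the parallel argument for the process $Y$ (Theorem~\ref{teoaproxy}, via Lemmas~\ref{lemB1}--\ref{lemL22}), and that argument follows precisely the scheme you outline: split into the three regions, rewrite the tail via $s=1/v$ and $B_3$, integrate by parts on each piece, and invoke Theorem~\ref{T1.1} on the sup norms $\|B_i-Z_i^{(n)}\|_\infty$. So your plan is correct and matches the paper's approach.

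One point needs correcting. The contributions you call the ``deterministic bias'' are not deterministic: for $H>1/2$ the piece omitted by the cutoff is $\int_{\varepsilon_n\vee(1/a)}^{0}\partial_s f_t(1/v)\,v^{-3}B_3(v)\,dv$, which still contains $B_3$, and for $H<1/2$ the analogous remainders are Wiener integrals against $B_2$. A Taylor-style estimate of the kernel alone does not control these. In the paper (see Lemmas~\ref{lemG1}, \ref{lemD22}, \ref{lemK22}, \ref{lemL22} for the $Y$-analogues) the mechanism is instead the pathwise H\"older continuity of $B_2,B_3$: one writes $|B_3(v)|\le Y|v|^{1/2-\gamma}$ for a random variable $Y$ with all moments finite, integrates to obtain a bound of the form $CY|\varepsilon_n|^{1-H-\gamma}$ (respectively $CY|\varepsilon_n|^{H-\gamma}$ when $H<1/2$), and then applies Chebyshev's inequality with an arbitrarily high power $r$ to deduce the $o(n^{-q})$ tail. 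The constraint $|H-1/2|<\beta$ enters exactly here, ensuring the exponent $\kappa$ in $n^{\kappa}$ is positive once $\gamma$ is taken small. Without this H\"older\,+\,Chebyshev step your argument gives at best an almost-sure or moment rate, not the uniform $o(n^{-q})$ probability bound claimed.
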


We define another function
\begin{equation}
\label{eqdeffunF}
F_t(s)= (-t-s)^{H-1/2}-(-s)^{H-1/2}\  \ \ \  \text{for} \ \ \  s< -t <0.
\end{equation}
In \cite{GGL1} $Z^{(n)}_2(s)$ was defined for $s\in [a, 0]$ and $Z^{(n)}_3 (s)$ was defined for $ s\in [1/a,0]$, where $a<0$ was arbitrary, but for the approximation of sfBm we need   $a<-T$ so that $F_t(s)$ is well behaved.

Now we define  approximating processes for $Y$ and $S$ in (\ref{eqrepS1}), 
again for a fixed $0<\beta <1/2$.

For $H>1/2$ we define the process $Y^{(n)}_{\beta}=(Y^{(n)}_{\beta}(t))_{t\in [0, T]}$ by
\begin{align}
\label{eqdefyn1}
Y^{(n)}_{\beta}(t)&=C\biggl\{-\int^0_{-t} (-s)^{H-1/2}dZ_2^{(n)}(s) + \int_{a}^{-t} F_t(s)dZ_2^{(n)}(s)+F_t(a)Z_2^{(n)}(a)\bigg.\notag\\
&\hspace{2.5cm}+\bigg.\int_{1/a}^{0}\biggl(-\int_{1/a}^{[{\varepsilon}_n\vee (1/a)]\wedge s }\partial_sF_t\biggl(\frac{1}{v}\biggr)\frac{1}{v^3}dv\biggr)dZ_3^{(n)}(s)\biggr\},
\end{align} 
and for $H<1/2$ we define the process $\hat{Y}^{(n)}_{\beta}=\left(\hat{Y}^{(n)}_{\beta}(t)\right)_{t \in [0, T]}$ by
\begin{align}
\label{eqdefyn2}
\hat{Y }^{(n)}_{\beta}(t)=C&\biggl\{-\int_{-t}^{\varepsilon_n\vee (-t)} (-s)^{H-1/2}dZ_2^{(n)}(s)\notag\\
& -  \int_{\varepsilon_n\vee(-t)}^0 (-s-\varepsilon_n)^{H-1/2}dZ_2^{(n)}(s)\bigg. +F_t(a)Z_2^{(n)}(a)\notag\\
&-\int_{1/a}^{0}\biggl(-\int_{1/a}^{s}\partial_sF_t\biggl(\frac{1}{v}\biggr)\frac{1}{v^3}dv\biggr)dZ_3^{(n)}(s)+ \int_{a}^{a\vee(-t+\varepsilon_n)}F_t(s)dZ_2^{(n)}(s)\notag\\
&+\bigg. I_{\left\{-\varepsilon_n\leq t \right\}}\int_{a\vee(-t+\varepsilon_n)}^{-t}F_{t+\varepsilon_n}(s)dZ^{(n)}_2(s)\biggr\}.
\end{align} 
We write $Y^{(n)}=(Y^n(t))_{t\in [0, T]}$, where 
\begin{equation}
	\label{eqdefyn}
	Y^{(n)}=\begin{cases}
	Y^{(n)}_{\beta} & \text{if} \ \  H>1/2,\\
	\hat{Y}^{(n)}_{\beta} & \text{if} \ \  H<1/2,\\
	\end{cases}
\end{equation}
(note that $Y^{(n)}$ involves only $Z^{(n)}_2$ and $Z^{(n)}_3$), and we define 
\begin{align}
\label{eqdefsn}
S^{(n)}(t)&= W^{(n)}(t)+ Y^{(n)}(t),
\end{align} 
with $W^{(n)}$ as in (\ref{eqdefwn}).

The following theorem gives the convergence and the rate of convergence of  $Y^{(n)}$  to $Y$. 

\begin{theorem} 
\label{teoaproxy}
Let $H\neq  1/2$ and let $Y$ and $Y^{(n)}$ be the processes defined by (\ref{eqdefY}) and (\ref{eqdefyn}), respectively. Then for each $q>0$ and each $\beta$ such that 
$0<\left|H-1/2\right|< \beta < 1/2$, there is a constant $C>0$ such that  
$$
        P\left(\sup_{0\leq t \leq T}\left|Y(t) - Y^{(n)}(t)\right|> Cn^{-(1/2-\beta)}(\log n )^{5/2}\right)=o(n^{-q}) \ \ \text{as} \ n\to \infty.
$$
\end{theorem}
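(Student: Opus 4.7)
The plan is to follow the structure of the proof of Theorem \ref{teoh1} from \cite{GGL1}, with modifications dictated by the new kernel $F_t$ of (\ref{eqdeffunF}) and by the fact that the stochastic integral in (\ref{eqdefY}) is split at the $t$-dependent location $-t$. The first step is to write
$$
Y(t)= C\Bigl\{-\int_{-t}^0 (-s)^{H-1/2}dB(s) + \int_{a}^{-t}F_t(s)dB(s) + \int_{-\infty}^{a}F_t(s)dB(s)\Bigr\},
$$
and to match each of these three summands with the corresponding piece of (\ref{eqdefyn1}) or (\ref{eqdefyn2}). The tail integral is first rewritten via the change of variable $s=1/v$ and an integration by parts, producing a deterministic integral of $B_3$ against $\partial_s F_t(1/v)/v^3$ on $[1/a,0]$; this is precisely the form of the last term in the definitions of both $Y^{(n)}_\beta$ and $\hat Y^{(n)}_\beta$.

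The core estimate is carried out piece by piece. For each of the three pieces, apply integration by parts to both the Brownian integral and its transport-process analogue, so that the difference reduces to boundary terms plus Lebesgue integrals of $(B_i - Z_i^{(n)})$ against the derivatives of the relevant kernels. In the case $H>1/2$, the kernels $(-s)^{H-1/2}$ on $[-t,0]$ and $F_t(s)$ on $[a,-t]$ are bounded and their derivatives (of order $|s|^{H-3/2}$ and $|s+t|^{H-3/2}$, with $H-3/2 \in (-1,-1/2)$) are integrable on the relevant intervals. Combining these integrability bounds with (\ref{eq98}) and Theorem \ref{T1.1} yields an error of order $n^{-1/2}(\log n)^{5/2}$, well within the claimed rate.

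In the case $H<1/2$ the same kernels fail to be integrable against their derivatives near the singular endpoints $s=0$ and $s=-t$, which is the reason for the $\varepsilon_n$-shifts in (\ref{eqdefyn2}). Here the error further splits into (i) the transport-approximation error on the intervals where the $\varepsilon_n$-regularised kernels $(-s-\varepsilon_n)^{H-1/2}$ and $F_{t+\varepsilon_n}(s)$ are used -- now with integrable derivatives, so controlled as above -- and (ii) a \emph{shift error} of the form
$$
\int \bigl[(-s)^{H-1/2}-(-s-\varepsilon_n)^{H-1/2}\bigr]dB_2(s) \quad \text{and} \quad \int \bigl[F_t(s)-F_{t+\varepsilon_n}(s)\bigr]dB_2(s),
$$
plus the contribution of small intervals of length $|\varepsilon_n|$ near the singular endpoints. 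The shift errors are estimated via their $L^2$ norms, which are of order $|\varepsilon_n|^H$, and then upgraded to an almost sure, $t$-uniform bound by the chaining/Borel--Cantelli argument of \cite{GGL1}. The choice $\varepsilon_n=-n^{-\beta/|H-1/2|}$ balances the two contributions: under the hypothesis $|H-1/2|<\beta<1/2$, both are of order $n^{-(1/2-\beta)}(\log n)^{5/2}$.

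The main obstacle is the case $H<1/2$, and within it the estimate of the shift error for the kernel $F_t$ near the \emph{moving} singular point $s=-t$. Compared with the analogous bound for $f_t$ near $s=0$ carried out in \cite{GGL1}, here the singular location depends on $t$, so one must show that the $L^2$ norms of the shifted differences, as well as the derivative bounds on $F_{t+\varepsilon_n}$, can be taken uniformly in $t\in[0,T]$. This is precisely what the hypothesis $a<-T$ is designed to ensure: the interval $[a,-t]$ stays of bounded length and the singular point $-t$ remains bounded away from $a$, so all kernel and derivative bounds can be made independent of $t$, and the proof of Theorem \ref{teoh1} then adapts with only routine computational changes.
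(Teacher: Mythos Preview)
Your decomposition and overall strategy match the paper's: split $Y(t)$ at $a$ and at $-t$, rewrite the tail via $s=1/v$ and integration by parts (this is Lemma~\ref{lemprop}(4)), and compare each piece with the corresponding transport integral after integrating by parts. For $H>1/2$ the paper does exactly what you sketch for the pieces on $[-t,0]$ and $[a,-t]$ (Lemmas~\ref{lemA1}, \ref{lemD1}); note, however, that the tail integral in the $B_3$ variable still carries a non-integrable weight $(-v)^{-1/2-H}$ when $H>1/2$, so the $\varepsilon_n$-truncation in (\ref{eqdefyn1}) is essential there too, and the residual $\int_{\varepsilon_n}^0$ must be estimated separately (Lemmas~\ref{lemF1}, \ref{lemG1}). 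You acknowledge the truncation in the definition but do not say how that residual is controlled.

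The substantive methodological difference is in how the ``shift errors'' are handled. You propose to bound their $L^2$ norms (order $|\varepsilon_n|^H$) and then lift to a $t$-uniform almost-sure bound via a chaining/Borel--Cantelli argument. The paper does \emph{not} use $L^2$ bounds or chaining anywhere: for each of the residual terms ($I_5$ in Lemma~\ref{lemG1}; $J_3,J_8,J_9$ in Lemmas~\ref{lemD22}, \ref{lemK22}, \ref{lemL22}) it rewrites the stochastic integral, via Fubini, as an $x$-integral of \emph{increments} $B_i(u(x))-B_i(v(x))$, invokes the pathwise H\"older bound $|B_i(u)-B_i(v)|\le Y|u-v|^{1/2-\gamma}$ with a single random constant $Y$, shows the resulting deterministic integral is $O\bigl((-\varepsilon_n)^{H-\gamma}\bigr)$ (or $(-\varepsilon_n)^{1-H-\gamma}$ for the tail), and finishes by Chebyshev on $Y$. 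This gives the uniform-in-$t$ bound directly, with no chaining and no supremum over a Gaussian family. Your $L^2$+chaining route would also work in principle, but it requires an additional regularity-in-$t$ estimate for the shift-error process to run the chaining, which you have not supplied; the paper's pathwise-H\"older argument is shorter and entirely elementary. Incidentally, the companion paper \cite{GGL1} uses the same pathwise-H\"older technique, so the ``chaining/Borel--Cantelli argument of \cite{GGL1}'' you invoke is not actually what is done there.
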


From Theorems \ref{teoh1} and \ref{teoaproxy} we have the following result.

\begin{corollary} 
\label{teoaproxs}
Let $S$ and $S^{(n)}$ be the processes defined by (\ref{eqdefS}) and (\ref{eqdefsn}), respectively. Then  for each $q>0$ and each $\beta$ such that 
$0<\left|H-1/2\right|< \beta < 1/2$, there is a constant $C>0$ such that  
%\begin{equation}
$$
        P\left(\sup_{0\leq t \leq T}\left|S(t) - S^{(n)}(t)\right|> Cn^{-(1/2-\beta)}(\log n )^{5/2}\right)=o(n^{-q}) \ \ \text{as} \ n\to \infty.
$$
%\end{equation}
\end{corollary}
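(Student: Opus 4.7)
My plan is to deduce the corollary directly from Theorems~\ref{teoh1} and~\ref{teoaproxy} by the triangle inequality and a union bound. The decisive observation is that the decomposition $S=W+Y$ from (\ref{eqrepS1}) is matched at the approximation level by the definition $S^{(n)}=W^{(n)}+Y^{(n)}$ in (\ref{eqdefsn}), with \emph{both} approximations constructed on the common probability space carrying $B$ and the three coupled transport processes $Z_1^{(n)}, Z_2^{(n)}, Z_3^{(n)}$ provided by Theorem~\ref{T1.1}. This shared coupling is what makes the additive comparison between $S$ and $S^{(n)}$ a pathwise statement, not merely a distributional one.

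Concretely, I first write
$$S(t)-S^{(n)}(t)=\bigl(W(t)-W^{(n)}(t)\bigr)+\bigl(Y(t)-Y^{(n)}(t)\bigr),$$
and then the triangle inequality yields
$$\sup_{0\le t\le T}\bigl|S(t)-S^{(n)}(t)\bigr|\le \sup_{0\le t\le T}\bigl|W(t)-W^{(n)}(t)\bigr|+\sup_{0\le t\le T}\bigl|Y(t)-Y^{(n)}(t)\bigr|.$$
Fix $q>0$ and $\beta$ with $0<|H-1/2|<\beta<1/2$, and let $C_W, C_Y$ denote the constants supplied by Theorems~\ref{teoh1} and~\ref{teoaproxy}, respectively, for this choice of $q$ and $\beta$. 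Put $C=C_W+C_Y$. If $\sup_{0\le t\le T}|S(t)-S^{(n)}(t)|>Cn^{-(1/2-\beta)}(\log n)^{5/2}$, then by the above inequality at least one of the inequalities $\sup|W-W^{(n)}|>C_W n^{-(1/2-\beta)}(\log n)^{5/2}$ or $\sup|Y-Y^{(n)}|>C_Y n^{-(1/2-\beta)}(\log n)^{5/2}$ must hold.

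A union bound, together with the probability estimates of Theorems~\ref{teoh1} and~\ref{teoaproxy}, then gives
$$P\!\left(\sup_{0\le t\le T}\bigl|S(t)-S^{(n)}(t)\bigr|>Cn^{-(1/2-\beta)}(\log n)^{5/2}\right)=o(n^{-q})+o(n^{-q})=o(n^{-q})$$
as $n\to\infty$, which is the desired bound. There is no real obstacle at this stage: the substantive analytic work has already been carried out in Theorems~\ref{teoh1} and~\ref{teoaproxy}, the first of which comes from \cite{GGL1} and the second of which is the main technical result of the present paper. The only point one must verify is that $W^{(n)}$ and $Y^{(n)}$ are built from the \emph{same} triple of transport processes on the \emph{same} probability space, which is immediate from the definitions in (\ref{eqdefwn}) and (\ref{eqdefyn}); without this coupling, the sum $W^{(n)}+Y^{(n)}$ would approximate $S$ only in distribution rather than in the strong (uniform, almost sure rate) sense required here.
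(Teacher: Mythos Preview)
Your proposal is correct and matches the paper's approach exactly: the paper simply states that the corollary follows from Theorems~\ref{teoh1} and~\ref{teoaproxy}, and your triangle-inequality/union-bound argument is precisely the implicit reasoning behind that statement. There is nothing to add.
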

Note that the approximation becomes better when $H$ approaches $1/2$.
%\begin{remark}
\vglue .5cm
\noindent
{\bf Remark 2.4}
\label{R2.5'}  The reason that the rates of convergence for $W$ and $Y$ are the same is that the integral representations of $W$ and $Y$, 
(\ref{eqdefW}) and (\ref{eqdefY}), have similar kernels, and the approximations depend basically on the rate of the transport approximation for Brownian motion and on the H\"older continuity of Brownian motion. Equation (\ref{eqrepS1}) is a decomposition of sfBm $S$ as a sum of a fBm $W$ and a process $Y$, which holds everywhere on the sample space, and $W$ and $Y$ are dependent (but the dependence does not play a role in the proofs). In \cite{BB} (which contains an approximation of sfBm in law) and \cite{RT}, for the case $H<1/2$, sfBm has a decomposition with equality in law as the sum of a fBm and a  process of the form 
$$
\int^\infty_0 (1-e^{-rt})r^{-(1+2H)/2}dB_1(r), \quad t\geq 0,
$$
where $B_1$ is a Brownian motion. This kind of process was introduced in \cite{LN}. In that decomposition the Brownian motions $B$ and $B_1$ are independent. 
That representation could be used for proving an approximation of sfBm with transport processes in the case $H<1/2$, but it would require another independent set of transport processes to approximate $B_1$. We stress that our approximation is strong and holds for all $H$.

\vglue 1cm
%\section{Proofs}
\noindent
{\bf 3. Proofs}
\setcounter{section}{3}
\setcounter{equation}{0}
\setcounter{theorem}{0}
\vglue .5cm
The proofs are based on a series of lemmas.

\begin{lemma}
\label{lemprop}
For each fixed $t>0$, the function $F_t$ defined by (\ref{eqdeffunF}) has the following properties:

%\begin{enumerate}

%\item
\noindent
(1)
\begin{equation}
\label{eqpropF2}
|\partial_sF_t(s)|\leq |H-1/2|t(3/2-H)(-t-s)^{H-5/2}, \ \ s\leq -t.
\end{equation}
(2)
\begin{equation}
\label{eqpropF3}
\int_{-\infty}^{a} |\partial_sF_t(s)|(-s)^{1/2+\gamma}ds< \infty \ \text{for each} \  0<\gamma <(1-H)\wedge (1/2).
\end{equation}
(3)
\begin{equation}
\label{eqpropF4}
\lim_{b\to -\infty}F_t(b)B(b)=0 \ \text{a.s.}
\end{equation}
(4)
\begin{equation}
\label{eq:3.4}
%\label{eqpropF5}
 \int_{-\infty}^{a} F_t(s)dB(s)=F_t(a)B_2(a)-\int_{1/a}^0\partial_sF_t\left(\frac{1}{v}\right)\frac{1}{v^3}B_3(v)dv.
\end{equation}
%\end{enumerate}
\end{lemma}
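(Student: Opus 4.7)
The plan is to reduce all four parts to analytic estimates on the kernel $F_t$ combined with classical facts about Brownian motion. The common engine is the mean value theorem applied to a power function, exploiting that the exponents $H-1/2$ and $H-3/2$ are both strictly less than one.

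For (1) I differentiate to obtain $\partial_sF_t(s)=(H-1/2)[(-s)^{H-3/2}-(-t-s)^{H-3/2}]$ and apply the mean value theorem to $x\mapsto x^{H-3/2}$ on the interval $[-t-s,-s]$ of length $t$. Since $(H-3/2)\xi^{H-5/2}$ has absolute value decreasing in $\xi$ (because $H-5/2<0$), it is largest at the smaller endpoint $\xi=-t-s$, which produces exactly the bound (3.1). For (2) I insert that bound and substitute $u=-s$, reducing the integral to $\int_{-a}^{\infty}(u-t)^{H-5/2}u^{1/2+\gamma}\,du$. The lower endpoint $u=-a>t$ poses no singularity, and as $u\to\infty$ the integrand decays like $u^{H-2+\gamma}$, which is integrable precisely because $\gamma<1-H$.

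For (3) the same mean value argument applied directly to $F_t$ gives $|F_t(b)|\le |H-1/2|\,t\,(-t-b)^{H-3/2}=O(|b|^{H-3/2})$ as $b\to-\infty$. Combining this with the law of the iterated logarithm, which provides $|B(b)|=O(\sqrt{|b|\log\log|b|})$ almost surely, yields $|F_t(b)B(b)|=O(|b|^{H-1}\sqrt{\log\log|b|})\to 0$ because $H<1$.

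For (4) the Wiener integral $\int_{-\infty}^a F_t(s)\,dB(s)$ is well defined since the estimate from (3) gives $F_t^2\in L^1((-\infty,a])$ (as $2H-3<-1$). Applying integration by parts on a truncated interval $[-M,a]$ yields the identity $F_t(a)B(a)-F_t(-M)B(-M)-\int_{-M}^a\partial_sF_t(s)B(s)\,ds$; I then let $M\to\infty$, so the boundary term at $-\infty$ vanishes by (3) and the pathwise Riemann integral converges absolutely thanks to (1) combined with the LIL-type growth of $B$. To recast the remaining integral on $[1/a,0]$, I substitute $s=1/v$, $ds=-v^{-2}dv$, and use the identity $B(1/v)=B_3(v)/v$ for $v<0$ from the definition of $B_3$; this transforms the integral cleanly into $\int_{1/a}^0\partial_sF_t(1/v)\,v^{-3}B_3(v)\,dv$, matching the right-hand side of (3.4). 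The main technical obstacle is the bookkeeping in (4) in justifying the limit $M\to\infty$ for both the stochastic and the pathwise integral; both reductions rely on dominated convergence using the bounds of (1)--(3), but they must be carried out uniformly in $M$.
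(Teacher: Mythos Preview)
Your proof is correct and follows essentially the same approach as the paper: mean value theorem for (1), the resulting pointwise bound for (2), decay of $F_t$ against the growth of $B$ for (3), and integration by parts on a truncation plus the change of variable $s=1/v$ for (4). The only minor technical differences are that in (2) you check the decay rate $u^{H-2+\gamma}$ directly while the paper inserts an extra integration by parts, and in (3)--(4) you control $|B(b)|$ via the law of the iterated logarithm whereas the paper uses the pathwise H\"older continuity of $B_3$ on $[1/a,0]$ to obtain the equivalent bound $|B(s)|\le Y(-s)^{1/2+\gamma}$; both choices work equally well.
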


\begin{proof}

\noindent  
(1)  
 $$\partial_s F_t(s)=(H-1/2)[(-s)^{H-3/2}-(-t-s)^{H-3/2}].$$ 
Taking   $g(x)=x^{H-3/2}$, \  $g'(x)=(H-3/2)x^{H-5/2}$, \  $x\in [-t-s, -s]$. By the mean value theorem, for some $r\in [-t-s, -s]$,
$$|(-s)^{H-3/2}-(-t-s)^{H-3/2}|=|-g'(r)(-t-s+s)|=t(3/2-H)r^{H-5/2}\leq t(3/2-H)(-t-s)^{H-5/2}.$$
\vglue .25cm
\noindent
(2)  From (1) and integration by parts we have 
\begin{align}
\label{eq80}
&\int_{-\infty}^{a} |\partial_sF_t(s)|(-s)^{1/2+\gamma}ds \leq |H-1/2|t(3/2-H)\int_{-\infty}^{a}(-t-s)^{H-5/2}(-s)^{1/2+\gamma}ds\notag\\ 
&= \;\;|H-1/2|t\left[\left.(-s)^{\gamma+1/2}(-t-s)^{H-3/2}\right|_{-\infty}^{a}+\int_{-\infty}^{a}(1/2+\gamma)(-t-s)^{H-3/2}(-s)^{\gamma-1/2}ds\right].
\end{align}
Since $\gamma<(1-H)\wedge (1/2)$,
\begin{align}
\label{eq81}
\lim_{s\to -\infty}(-s)^{\gamma+1/2}(-t-s)^{H-3/2}&=\lim_{s\to -\infty}(-s)^{\gamma+H-1}\left(\frac{t}{s}+1\right)^{H-3/2}=0,
\end{align}
and
\begin{align*}
%\label{eq82}
&\int_{-\infty}^{a}(-t-s)^{H-3/2}(-s)^{\gamma-1/2}ds\leq \int_{-\infty}^{a}(-t-s)^{H+\gamma-2}ds\notag \\
&=\frac{(-t-a)^{H+\gamma-1}}{1-H-\gamma} <\infty,
\end{align*}
which together (\ref{eq80}) and (\ref{eq81}) shows  that statement $(2)$ holds.
\vglue .25cm
\noindent
(3)  By the pathwise H\"older continuity of $B_3$ on $[1/a, 0]$, taking $0<\gamma< (1-H)\wedge (1/2)$, we have
$|sB(1/s)|< Y(-s)^{1/2-\gamma}$ for each $s\in [1/a, 0]$ and  a random variable $Y$. Then
$|B(s)|< Y(-s)^{1/2+\gamma}$ for each $s\in (-\infty, a]$. Therefore,
\begin{align*}
%\label{eq83}
|F_t(b)B(b)|&\leq \left|(-t-b)^{H-1/2}-(-b)^{H-1/2}\right|Y(-b)^{1/2+\gamma}\notag\\
&=\left|\left(\frac{t}{b}+1\right)^{H-1/2}-1\right|Y(-b)^{H+\gamma},
\end{align*}
and using l'H\^opital rule,
\begin{align*}
&\lim_{b\to -\infty}\frac{\left|\left(\frac{t}{b}+1\right)^{H-1/2}-1\right|}{(-b)^{-\gamma-H}}=0.
\end{align*}
\vglue .25cm
\noindent
(4)   Since $F_t$ is square-integrable on $(-\infty, a)$, $\lim_{b\to -\infty}\int_b^{a} F_t(s)dB(s)=\int_{-\infty}^{a} F_t(s)dB(s)$. Thus, applying integration by parts,
$$\int_b^{a} F_t(s)dB(s)= F_t({a})B({a})- F_t(b)B(b)- \int_b^{a} \partial_sF_t(s)B(s)ds. $$
By  the pathwise H\"older continuity of  $B$ (see the proof of Statement (3)) and (\ref{eqpropF3}),
\begin{align*}
\int_{-\infty}^{a} |\partial_sF_t(s)B(s)|ds < \infty,
\end{align*}
and using (\ref{eqpropF4}),
%\begin{equation}
$$
%\label{eq84}
\int_{-\infty}^{a} F_t(s)dB(s)= F_t(a)B(a)-\int_{-\infty}^{a} \partial_sF_t(s)B(s)ds.
%\end{equation}
$$
Now, with the change of variable $s=1/v$,
\begin{align*}
%\label{eq85}
\int_{-\infty}^{a} \partial_sF_t(s)B(s)ds&=\int_{1/a}^0\partial_sF_t\left(\frac{1}{v}\right)\frac{1}{v^2}B\left(\frac{1}{v}\right)dv\notag\\
&=\int_{1/a}^0\partial_sF_t\left(\frac{1}{v}\right)\frac{1}{v^3}B_3(v)dv,
\end{align*}
and we obtain (\ref{eq:3.4}).
\end{proof}

We prove Theorem \ref{teoaproxy}  separately for $H>1/2$ and $H<1/2$. 
We denote  the sup norm by  $||\;||_\infty$, and it will always be clear from the context which interval it  refers to.
\vglue .25cm
\noindent 
{\bf 3.1. Case $\mathbf{H>1/2}$}

We fix $H-1/2 <\beta <1/2$ and define
%\begin{equation}
%\label{eq:3.11}
$$
\alpha_n=n^{-(1/2-\beta)}(\log n)^{5/2}.
$$
%\end{equation}
The proof will be a consequence of the following lemmas, involving $Z^{(n)}_2$ and $Z^{(n)}_3$.

\begin{lemma}
\label{lemB1} For each $q>0$ there is $C>0$ such that  
%\begin{equation}
%\label{eqB1}
$$
I_1=P\left(\sup_{0\leq t \leq T}\left| F_t(a)B_2(a) -F_t(a)Z_2^{(n)}(a)\right|> C\alpha_n \right)= o(n^{-q})\ \ \ \text{as} \ \   n\to \infty.
%\end{equation}
$$
\end{lemma}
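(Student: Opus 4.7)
The plan is to observe that the difference $F_t(a)B_2(a)-F_t(a)Z_2^{(n)}(a)$ factors as $F_t(a)\bigl(B_2(a)-Z_2^{(n)}(a)\bigr)$, so that the supremum over $t$ only affects the deterministic factor $F_t(a)$, and the probability is ultimately governed by the single random quantity $|B_2(a)-Z_2^{(n)}(a)|$, which is controlled by the transport approximation~(\ref{eq98}) for $Z_2^{(n)}$.

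Concretely, first I would show that $M:=\sup_{0\le t\le T}|F_t(a)|<\infty$. Since $a<-T$, for every $t\in[0,T]$ one has $-t-a\ge -T-a>0$, so by the definition~(\ref{eqdeffunF}) the map $t\mapsto F_t(a)=(-t-a)^{H-1/2}-(-a)^{H-1/2}$ is continuous on the compact interval $[0,T]$, hence bounded by some constant $M=M(a,T,H)$. Second, I would note that
$$
\sup_{0\le t\le T}\bigl|F_t(a)B_2(a)-F_t(a)Z_2^{(n)}(a)\bigr|\le M\,\bigl|B_2(a)-Z_2^{(n)}(a)\bigr|\le M\sup_{a\le s\le 0}\bigl|B_2(s)-Z_2^{(n)}(s)\bigr|.
$$

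Third, I would exploit the fact that $\beta>0$ implies $\alpha_n=n^{-(1/2-\beta)}(\log n)^{5/2}\ge n^{-1/2}(\log n)^{5/2}$ for all $n\ge 1$. Choosing $C\ge MC^{(2)}$, where $C^{(2)}$ is the constant appearing in~(\ref{eq98}) for $i=2$, gives
$$
I_1\le P\Bigl(\sup_{a\le s\le 0}\bigl|B_2(s)-Z_2^{(n)}(s)\bigr|>(C/M)\alpha_n\Bigr)\le P\Bigl(\sup_{a\le s\le 0}\bigl|B_2(s)-Z_2^{(n)}(s)\bigr|>C^{(2)}n^{-1/2}(\log n)^{5/2}\Bigr),
$$
which by~(\ref{eq98}) (with $i=2$) is $o(n^{-q})$ for any prescribed $q>0$.

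There is essentially no serious obstacle here; the only point requiring care is the boundedness of $F_t(a)$ in $t$, for which the standing assumption $a<-T$ is crucial (it keeps $-t-a$ bounded away from $0$ uniformly in $t\in[0,T]$, so no singularity of the kernel comes into play). The rest is a direct reduction to Theorem~\ref{T1.1} via~(\ref{eq98}), using that the exponent $1/2-\beta$ in $\alpha_n$ is strictly smaller than $1/2$.
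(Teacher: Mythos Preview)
Your proof is correct and follows essentially the same approach as the paper: factor out $|B_2(a)-Z_2^{(n)}(a)|$, bound $\sup_{0\le t\le T}|F_t(a)|$ by a finite constant, and reduce to~(\ref{eq98}). The only cosmetic difference is that the paper writes down the explicit bound $|F_t(a)|\le(-a)^{H-1/2}$ (valid here since $H>1/2$) instead of invoking continuity on $[0,T]$.
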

\begin{proof}

\begin{align*}
\left| F_t(a)B_2(a) - F_t(a)Z_2^{(n)}(a)\right|&\leq \|B_2 - Z_2^{(n)}\|_{\infty}|(-t-a)^{H-1/2}-(-a)^{H-1/2}|\\
&\leq \|B_2- Z_2^{(n)}\|_{\infty}(-a)^{H-1/2},
\end{align*}
then, by (\ref{eq98}),
\begin{align*}
I_1&\leq P\left(\|B_2 - Z_2^{(n)}\|_{\infty}(-a)^{H-1/2}> C\alpha_n \right)\\
&\leq P\left(\|B_2 - Z_2^{(n)}\|_{\infty}> Cn^{-1/2}(\log n)^{5/2} \right)=o(n^{-q}).
\end{align*}
\end{proof}

\begin{lemma}
\label{lemA1} For each $q>0$ there is $C>0$ such that  
\begin{align*}
%\label{eqA1}
I_2&=P\left(\sup_{0\leq t \leq T}\left|\int^0_{-t} (-s)^{H-1/2}dB_2(s) - \int^0_{-t} (-s)^{H-1/2}dZ_2^{(n)}(s)\right|> C\alpha_n  \right)\notag\\
&= o(n^{-q}) \ \ \ \text{as} \ \   n\to \infty.
\end{align*}
\end{lemma}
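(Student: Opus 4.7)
The plan is to reduce both Riemann-Stieltjes integrals, via integration by parts, to expressions in which $B_2$ and $Z_2^{(n)}$ appear only as values of ordinary functions, so that the difference is controlled by the sup-norm $\|B_2 - Z_2^{(n)}\|_\infty$, and then to invoke (\ref{eq98}) with $i=2$ to conclude.

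Since $H>1/2$, the kernel $\phi(s)=(-s)^{H-1/2}$ is $C^1$ on $[-T,0)$, continuous on $[-T,0]$, and vanishes at $s=0$. For a Wiener integral with a deterministic smooth integrand the integration-by-parts identity reads
\begin{equation*}
\int_{-t}^0 (-s)^{H-1/2}\,dB_2(s) = -t^{H-1/2}B_2(-t) + (H-1/2)\int_{-t}^0 (-s)^{H-3/2}B_2(s)\,ds,
\end{equation*}
and since $Z_2^{(n)}$ is piecewise linear (hence of bounded variation) on $[a,0]$, the same identity holds with $B_2$ replaced by $Z_2^{(n)}$. Writing $\Delta_n := B_2 - Z_2^{(n)}$ and subtracting, the quantity inside the supremum in the lemma equals
\begin{equation*}
-t^{H-1/2}\Delta_n(-t) + (H-1/2)\int_{-t}^0 (-s)^{H-3/2}\Delta_n(s)\,ds.
\end{equation*}

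Next, I would bound each term uniformly in $t \in [0,T]$. The boundary term is dominated by $T^{H-1/2}\|\Delta_n\|_\infty$, and the integral term by $(H-1/2)\|\Delta_n\|_\infty\int_0^T u^{H-3/2}\,du = T^{H-1/2}\|\Delta_n\|_\infty$, where the final integral is finite precisely because $H>1/2$. Hence the supremum in the statement of the lemma is at most $2T^{H-1/2}\|\Delta_n\|_\infty$. Since $\beta>0$ gives $n^{-1/2}(\log n)^{5/2} = o(\alpha_n)$, for all large $n$ the event in the lemma is contained in $\{\|\Delta_n\|_\infty > C' n^{-1/2}(\log n)^{5/2}\}$ for a suitable constant $C'$, whose probability is $o(n^{-q})$ by (\ref{eq98}). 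The only technical point worth flagging is the integrability of $(-s)^{H-3/2}$ at $s=0$, which is where the hypothesis $H>1/2$ enters; that is the reason this lemma is stated in this subsection only, and the corresponding $H<1/2$ version later will have to handle the endpoint differently. Otherwise the argument here is routine.
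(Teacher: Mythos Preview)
Your proof is correct and essentially identical to the paper's: both apply integration by parts to each Riemann--Stieltjes integral, subtract, bound the boundary and integral terms by $T^{H-1/2}\|B_2-Z_2^{(n)}\|_\infty$ each (using that $\int_0^T u^{H-3/2}\,du<\infty$ when $H>1/2$), and then invoke (\ref{eq98}). Your additional remarks on why the integration by parts is licit and where $H>1/2$ enters are helpful but do not depart from the paper's line of argument.
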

\begin{proof}
By  integration by parts, 
\begin{align*}
&\int^0_{-t} (-s)^{H-1/2}dB_2(s)=-t^{H-1/2}B_2(-t)+ (H-1/2)\int^0_{-t} 
(-s)^{H-3/2}B_2(s)ds.
\end{align*}
Analogously,
\begin{align*}
\int^0_{-t} (-s)^{H-1/2}dZ_2^{(n)}(s)&=-t^{H-1/2}Z_2^{(n)}(-t)+ (H-1/2)\int^0_{-t} 
(-s)^{H-3/2}Z_2^{(n)}(s)ds,
\end{align*}
then
\begin{align*}
&\left|\int^0_{-t} (-s)^{H-1/2}dB_2(s) - \int^0_{-t} (-s)^{H-1/2}dZ_2^{(n)}(s)\right|\\
&\leq t^{H-1/2}|B_2(-t)-Z_2^{(n)}(-t)|+(H-1/2)\int^0_{-t} (-s)^{H-3/2}\left|B_2(s)-Z_2^{(n)}(s)\right|ds\\
& \leq t^{H-1/2}\|B_2-Z_2^{(n)}\|_{\infty}+ (H-1/2)\|B_2-Z_2^{(n)}\|_{\infty}\left.\frac{-1}{H-1/2}(-s)^{H-1/2}\right|_{-t}^0\\
& \leq 2T^{H-1/2}\|B_2-Z_2^{(n)}\|_{\infty}.
\end{align*}
Consequently the result follows by (\ref{eq98}).
\end{proof}

\begin{lemma}
\label{lemD1} For each $q>0$ there is $C>0$ such that 
$$
%\begin{equation}
%\label{eqD1}
I_3=P\left(\sup_{0\leq t \leq T}\left|\int_{a}^{-t} F_t(s)dB_2(s) - \int_{a}^{-t} F_t(s)dZ_2^{(n)}(s)\right|> C\alpha_n \right)= o(n^{-q}) \ \ \ \text{as} \ \   n\to \infty.
%\end{equation}
$$
\end{lemma}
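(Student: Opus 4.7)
The plan is to reproduce the strategy used in Lemma \ref{lemA1}: integrate by parts in both Stieltjes integrals, then estimate the difference by $\|B_2 - Z_2^{(n)}\|_\infty$ multiplied by a deterministic factor that is uniformly bounded in $t \in [0, T]$. Once such a pathwise estimate is in place, the probability bound $I_3 = o(n^{-q})$ follows from (\ref{eq98}) exactly as in Lemmas \ref{lemB1} and \ref{lemA1}.

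After integration by parts in each integral and subtracting, the difference becomes
\begin{equation*}
F_t(-t)[B_2(-t) - Z_2^{(n)}(-t)] - F_t(a)[B_2(a) - Z_2^{(n)}(a)] - \int_a^{-t} \partial_s F_t(s)\,[B_2(s) - Z_2^{(n)}(s)]\,ds,
\end{equation*}
so the deterministic factor I have to control is $|F_t(-t)| + |F_t(a)| + \int_a^{-t}|\partial_s F_t(s)|\,ds$. The first equals $t^{H-1/2} \leq T^{H-1/2}$. For the second, since $H > 1/2$ the function $x \mapsto x^{H-1/2}$ is increasing and $0 < -t-a < -a$, so $F_t(a) = (-t-a)^{H-1/2} - (-a)^{H-1/2} \in [-(-a)^{H-1/2},\, 0]$, which is uniformly bounded in $t$.

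The only subtle step, and the one I expect to be the main obstacle, is handling $\int_a^{-t}|\partial_s F_t(s)|\,ds$: the bound supplied by Lemma \ref{lemprop}(1) involves $(-t-s)^{H-5/2}$, which is \emph{not} integrable at the right endpoint $s = -t$ when $H > 1/2$, so that estimate is useless here. I would circumvent this by noting that
\begin{equation*}
\partial_s F_t(s) = (H-1/2)\bigl[(-s)^{H-3/2} - (-t-s)^{H-3/2}\bigr] \leq 0 \quad \text{on } [a, -t],
\end{equation*}
because $0 < -t-s < -s$ and $H-3/2 < 0$. Thus $F_t$ is monotone decreasing on $[a, -t]$, the integral telescopes to $F_t(a) - F_t(-t)$, and it is uniformly bounded by $(-a)^{H-1/2} + T^{H-1/2}$. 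Combining the three pieces yields $\sup_{0 \leq t \leq T}|\,\cdot\,| \leq K\|B_2 - Z_2^{(n)}\|_\infty$ with $K = K(H,T,a)$, and (\ref{eq98}) applied to $B_2$ and $Z_2^{(n)}$ closes the argument.
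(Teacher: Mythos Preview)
Your proposal is correct and follows the same line as the paper: integrate by parts, bound by $\|B_2 - Z_2^{(n)}\|_\infty$ times $|F_t(-t)| + |F_t(a)| + \int_a^{-t}|\partial_s F_t(s)|\,ds$, and invoke (\ref{eq98}). The paper simply writes out $|\partial_s F_t(s)| = (H-1/2)[(-t-s)^{H-3/2} - (-s)^{H-3/2}]$ and integrates explicitly, which is exactly your ``telescoping'' observation in different clothing; the only cosmetic difference is that the paper then notices the cancellation $|F_t(a)| + \bigl(F_t(a) - F_t(-t)\bigr) = -F_t(a) + F_t(a) + t^{H-1/2} = t^{H-1/2}$, yielding the cleaner constant $2T^{H-1/2}$ rather than your $K(H,T,a)$.
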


\begin{proof} By integration by parts,
\begin{align*}
\int_{a}^{-t} F_t(s)dB_2(s)&=F_t(-t)B_2(-t)-F_t(a)B_2(a)- \int_{a}^{-t} \partial_sF_t(s)B_2(s)ds \notag
\end{align*}
and 
\begin{align*}
 \int_{a}^{-t} F_t(s)dZ_2^{(n)}(s)&= F_t(-t)Z_2^{(n)}(-t)-F_t(a)Z_2^{(n)}(a)-\int_{a}^{-t} \partial_sF_t(s)Z_2^{(n)}(s)ds,
\end{align*}
then,
\begin{align*}
&\left|\int_{a}^{-t} F_t(s)dB_2(s) - \int_{a}^{-t}F_t(s)dZ_2^{(n)}(s)\right|\\
&\leq \|B_2-Z^{(n)}_2\|_{\infty}\left[|F_t(-t)|+|F_t(a)| +\int_{a}^{-t}| \partial_sF_t(s)|ds\right]\\
&= \|B_2-Z^{(n)}_2\|_{\infty}\biggl[t^{H-1/2}+ |(-t-a)^{H-1/2}-(-a)^{H-1/2}|\biggr.\\
&\hspace{3.7cm}+\left.\int_{a}^{-t}(H-1/2)[(-t-s)^{H-3/2}-(-s)^{H-3/2}]ds\right]\\
&\leq \|B_2-Z^{(n)}_2\|_{\infty}2T^{H-1/2}.
\end{align*}
Therefore, by  (\ref{eq98}) the proof is complete.
%\begin{align*}
%I_3&\leq P\left(\|B_2-Z^n_2\|_{\infty}2T^{H-1/2}>C\alpha_n\right)\leq P\left(\|%B_2-Z^n_2\|_{\infty}>C\alpha_n\right)= o(n^{-q}).
%\end{align*}
 \end{proof}

\begin{lemma}
\label{lemF1} 
For each $q>0$ there is $C>0$ such that  
\begin{align*}
I_4&=P\biggl(\sup_{0\leq t\leq T}\biggl|\int_{1/a}^{{\varepsilon}_n\vee (1/a)} \partial_sF_t\biggl(\frac{1}{v}\biggr)\frac{1}{v^3}B_3(v)dv \biggr.\biggr.\\
&\hspace{2cm}\biggl.\biggl.- \int_{_{1/a}}^{0}\biggl(-\int_{1/a}^{[{\varepsilon}_n\vee (1/a)]\wedge s}\partial_sF_t\biggl(\frac{1}{v}\biggr)\frac{1}{v^3}dv\biggr)dZ_3^{(n)}(s)\biggr|> C\alpha_n \biggr)\\
&= o(n^{-q}) \ \ \ \text{as} \ \  n\to \infty.
\end{align*}
\end{lemma}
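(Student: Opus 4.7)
The plan is to reduce the difference inside $I_4$ to a single Lebesgue integral of $B_3-Z_3^{(n)}$ against a deterministic kernel, and then invoke the transport approximation (\ref{eq98}). For $n$ so small that $\varepsilon_n\leq 1/a$, both integrands vanish identically and $I_4=0$, so I assume $n$ large enough that $\varepsilon_n\vee(1/a)=\varepsilon_n$. Write $\phi_t(v)=\partial_sF_t(1/v)/v^3$ and $H_t^{(n)}(s)=-\int_{1/a}^{\varepsilon_n\wedge s}\phi_t(v)\,dv$. Then $H_t^{(n)}(1/a)=0$, $H_t^{(n)}$ is constant on $[\varepsilon_n,0]$, and $Z_3^{(n)}(0)=B_3(0)=0$; since $Z_3^{(n)}$ is piecewise linear, integration by parts yields
$$
\int_{1/a}^0 H_t^{(n)}(s)\,dZ_3^{(n)}(s)\;=\;\int_{1/a}^{\varepsilon_n}\phi_t(s)Z_3^{(n)}(s)\,ds.
$$
Hence the expression whose absolute value appears in $I_4$ equals $\int_{1/a}^{\varepsilon_n}\phi_t(v)[B_3(v)-Z_3^{(n)}(v)]\,dv$, and its supremum in $t$ is bounded by $\|B_3-Z_3^{(n)}\|_\infty\cdot\sup_{0\le t\le T}\int_{1/a}^{\varepsilon_n}|\phi_t(v)|\,dv$.

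The next step is to estimate this Lebesgue integral uniformly in $t\in[0,T]$. Applying Lemma \ref{lemprop}(1) and expressing $-t-1/v=(1+tv)/|v|$, one obtains
$$
|\phi_t(v)|\;\le\;|H-1/2|\,t\,(3/2-H)\,(1+tv)^{H-5/2}|v|^{-1/2-H}.
$$
Because $a<-T$, for $t\in[0,T]$ and $v\in[1/a,0)$ we have $-tv\le T/|a|<1$, so $(1+tv)^{H-5/2}$ is bounded by a constant depending only on $T,a,H$. The substitution $u=-v$ then gives
$$
\int_{1/a}^{\varepsilon_n}|\phi_t(v)|\,dv\;\le\;C\int_{|\varepsilon_n|}^{|1/a|} u^{-1/2-H}\,du\;\le\;\frac{C}{H-1/2}\,|\varepsilon_n|^{-(H-1/2)}\;=\;\frac{C}{H-1/2}\,n^{\beta},
$$
the last equality being the defining feature of $\varepsilon_n$ from (\ref{eqdefepsilon}). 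Combining with (\ref{eq98}) applied to $Z_3^{(n)}$, with probability $1-o(n^{-q})$ we have $\|B_3-Z_3^{(n)}\|_\infty\le Cn^{-1/2}(\log n)^{5/2}$, and multiplying by the $O(n^\beta)$ bound yields the required rate $\alpha_n=n^{-(1/2-\beta)}(\log n)^{5/2}$.

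The main technical point is the interplay between the singularity of $\phi_t$ near $v=0$ and the truncation at $\varepsilon_n$: the choice $\varepsilon_n=-n^{-\beta/|H-1/2|}$ is engineered so that $|\varepsilon_n|^{-(H-1/2)}$ is exactly $n^\beta$, which is what forces the hypothesis $\beta>H-1/2$ in order for the product $n^\beta\cdot n^{-1/2}$ to give a convergent rate. The restriction $a<-T$ (stronger than the $a<0$ used in \cite{GGL1} for the fBm case) is precisely what keeps $(1+tv)^{H-5/2}$ uniformly bounded in $t$, ensuring that no additional $n$-dependence enters through the $t$-supremum.
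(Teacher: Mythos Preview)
Your proof is correct and follows essentially the same route as the paper's: both reduce the difference to $\int_{1/a}^{\varepsilon_n}\phi_t(v)[B_3(v)-Z_3^{(n)}(v)]\,dv$ (you via integration by parts using $Z_3^{(n)}(0)=0$, the paper via Fubini's theorem, which are equivalent here since $Z_3^{(n)}$ has bounded variation), then bound $|\phi_t(v)|$ using Lemma~\ref{lemprop}(1), control $(1+tv)^{H-5/2}$ uniformly in $t$ via $a<-T$, and integrate $(-v)^{-1/2-H}$ to produce the factor $(-\varepsilon_n)^{1/2-H}=n^\beta$ that combines with $\|B_3-Z_3^{(n)}\|_\infty$ to yield~$\alpha_n$.
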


\begin{proof}
We have
\begin{align*}
\int_{1/a}^{{\varepsilon}_n\vee (1/a)} \partial_sF_t\biggl(\frac{1}{v}\biggr)\frac{1}{v^3}B_3(v)dv=& I_{\{\varepsilon_n>1/a\}}\int_{1/a}^{{\varepsilon}_n} \partial_sF_t\biggl(\frac{1}{v}\biggr)\frac{1}{v^3}B_3(v)dv.\\
\end{align*}
Analogously, applying Fubini's theorem we have 
%\begin{align*}

\begin{eqnarray*}
\lefteqn{\int_{_{1/a}}^{0}\biggl(-\int_{1/a}^{[{\varepsilon}_n\vee (1/a)]\wedge r}\partial_sF_t\biggl(\frac{1}{v}\biggr)\frac{1}{v^3}dv\biggr)dZ_3^{(n)}(r)}\\
&=&\int_{1/a}^{{\varepsilon}_n\vee (1/a)} \partial_sF_t\biggl(
\frac{1}{v}\biggr)\frac{1}{v^3}Z_3^{(n)}(v)dv\\
&=&I_{\{\varepsilon_n\geq 1/a\}}\int_{1/a}^{{\varepsilon}_n} 
\partial_sF_t\biggl(\frac{1}{v}\biggr)\frac{1}{v^3}Z_3^{(n)}(v)dv.
\end{eqnarray*}

%\end{align*}
Then, by (\ref{eqpropF2}),
%\begin{align*}
\begin{eqnarray*}
\lefteqn{\kern .35cm \left|\int_{1/a}^{{\varepsilon}_n\vee (1/a)} \partial_vf_t\left(\frac{1}{v}\right)\frac{1}{v^3}B_3(v)dv - \int_{1/a}^{0}\biggl(-\int_{1/a}^{[{\varepsilon}_n\vee (1/a)]\wedge s} \partial_sF_t\biggl(\frac{1}{v}\biggr)\frac{1}{v^3}dv\biggr)dZ_3^{(n)}(s)\right|}\\
&\leq&\|B_3-Z_3^{(n)}\|_{\infty}I_{\{\varepsilon_n>1/a\}}\int_{1/a}^{{\varepsilon}_n}\left|\partial_sF_t\left(\frac{1}{s}\right)\frac{1}{s^3}\right|ds\\
&\leq&\|B_3-Z_3^{(n)}\|_{\infty}I_{\{\varepsilon_n>1/a\}}\int_{1/a}^{{\varepsilon}_n} t(3/2-H)(H-1/2)(tv+1)^{H-5/2}(-v)^{-1/2-H}dv\\
&\leq& \|B_3-Z_3^{(n)}\|_{\infty}I_{\{\varepsilon_n>1/a\}} t(3/2-H)(H-1/2)\left(t/a+1\right)^{H-5/2}\int_{1/a}^{{\varepsilon_n}} (-v)^{-1/2-H}dv\\
&\leq& \|B_3-Z_3^{(n)}\|_{\infty}I_{\{\varepsilon_n>1/a\}} t(3/2-H)(1+T/a)^{H-5/2}[(-\varepsilon_n)^{1/2-H} -(-1/a)^{1/2-H}]\\
&\leq& \|B_3-Z_3^{(n)}\|_{\infty} T(3/2-H)(1+T/a)^{H-5/2}(-{\varepsilon}_n)^{1/2-H}.
%\end{align*}
\end{eqnarray*}
Hence, since $(-{\varepsilon}_n)^{1/2-H}=n^{\beta}$, by (\ref{eqdefepsilon}),
\begin{align*}
I_4&\leq P\left(\|B_3-Z_3^{(n)}\|_{\infty} T(3/2-H)(1+T/a)^{H-5/2}(-{\varepsilon}_n)^{1/2-H}>C\alpha_n\right)\notag\\
&\leq P\left(\|B_3-Z_3^{(n)}\|_{\infty} >Cn^{-\beta}\alpha_n\right)\notag\\
&\leq P\left(\|B_3-Z_3^{(n)}\|_{\infty} >Cn^{-1/2}(\log n)^{5/2}\right)=o(n^{-q}).
\end{align*}
\end{proof}

\begin{lemma}
\label{lemG1} For each $q>0$,  
$$
%\begin{equation}
%\label{eqG1}
I_5=P\left(\sup_{0\leq t \leq T}\left|\int_{{\varepsilon}_n\vee (1/a)}^0 \partial_sF_t\left(\frac{1}{v}\right)\frac{1}{v^3}B_3(v)dv \right|> \alpha_n\right)=o(n^{-q})\ \ \ \text{as} \ \  n\to \infty.
%\end{equation}
$$
\end{lemma}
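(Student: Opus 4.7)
The plan is to bound the integrand pathwise by a deterministic power of $|v|$ times a random Hölder constant of $B_3$, and then exploit the fact that the integration range shrinks to a polynomially small neighbourhood of $0$. First I note that $\varepsilon_n \vee (1/a) = \varepsilon_n$ for $n$ large (since $\varepsilon_n \to 0^-$ while $1/a$ is fixed), so the integral reduces to $\int_{\varepsilon_n}^0$.

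For the deterministic factor, I would apply Lemma \ref{lemprop} (1) at $s = 1/v$. Using the identity $-t - 1/v = (1+tv)/(-v)$ and the observation that $1 + tv \in [1/2, 1]$ uniformly in $t \in [0,T]$ and $v \in [\varepsilon_n, 0)$ for $n$ large, one gets
\begin{equation*}
\left|\partial_s F_t\!\left(\frac{1}{v}\right) \frac{1}{v^3}\right| \leq C_1 t\, (-v)^{-H-1/2},
\end{equation*}
for some $C_1 = C_1(H)$. For the random factor, I fix $0 < \gamma < (1-H) \wedge (1/2)$ and, exactly as in the proof of Lemma \ref{lemprop} (3), use the pathwise $(1/2-\gamma)$-Hölder continuity of $B_3$ together with $B_3(0) = 0$ to write $|B_3(v)| \leq Y(-v)^{1/2-\gamma}$, where $Y$ is a random variable with Gaussian tails (by Fernique's theorem) and hence finite moments of all orders. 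Since $-H-\gamma > -1$, combining and integrating yield
\begin{equation*}
\sup_{0 \leq t \leq T} \left|\int_{\varepsilon_n}^0 \partial_s F_t\!\left(\frac{1}{v}\right) \frac{1}{v^3} B_3(v)\, dv\right| \leq C_2\, Y\, (-\varepsilon_n)^{1-H-\gamma},
\end{equation*}
with $C_2$ depending on $H, T, \gamma$.

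It remains to choose $\gamma$ so that this right-hand side is smaller than $\alpha_n$. Since $(-\varepsilon_n) = n^{-\beta/(H-1/2)}$, the desired inequality $\beta(1-H-\gamma)/(H-1/2) > 1/2-\beta$ reduces at $\gamma = 0$ (after a short simplification) to $\beta > H-1/2$, which is precisely the standing hypothesis. By continuity, there exist $\gamma \in (0,(1-H)\wedge(1/2))$ and $\delta > 0$ such that $(-\varepsilon_n)^{1-H-\gamma} \leq n^{-(1/2-\beta)-\delta}$. Therefore
\begin{equation*}
I_5 \leq P\bigl(C_2\, Y > n^{\delta}(\log n)^{5/2}\bigr),
\end{equation*}
and this is $o(n^{-q})$ for every $q > 0$ by Markov's inequality applied with a sufficiently high moment of $Y$. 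The main obstacle is precisely this last calibration: the strict hypothesis $\beta > |H-1/2|$ is used essentially to reserve a polynomial slack $n^\delta$ that can absorb the Gaussian Hölder constant; without it the deterministic bound would barely match $\alpha_n$ and the tail of $Y$ could not be exploited.
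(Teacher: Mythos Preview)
Your proposal is correct and follows essentially the same approach as the paper: bound $|\partial_sF_t(1/v)\,v^{-3}|$ via Lemma~\ref{lemprop}(1), bound $|B_3(v)|$ by $Y(-v)^{1/2-\gamma}$ via H\"older continuity, integrate to get $CY(-\varepsilon_n)^{1-H-\gamma}$, and then choose $\gamma$ small enough so that the hypothesis $\beta>H-1/2$ yields a positive power slack, after which a moment (Chebyshev/Markov) bound on $Y$ finishes. The only cosmetic differences are that you restrict to $n$ large so that $\varepsilon_n\vee(1/a)=\varepsilon_n$ (the paper keeps the max throughout, which gives the same inequality since $-(\varepsilon_n\vee(1/a))\le -\varepsilon_n$), and you invoke Fernique's theorem to justify the finiteness of all moments of $Y$, which the paper uses without comment.
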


\begin{proof} By the pathwise H\"older continuity of  $B_3$ with  $0<\gamma<1-H$, and  (\ref{eqpropF2}),   
\begin{align*}
&\kern -.35cm \left|\int_{{\varepsilon}_n\vee (1/a)}^0\partial_sF_t\left(\frac{1}{v}\right)\frac{1}{v^3}B_3(v)dv\right|\\
\leq &\int_{{\varepsilon}_n\vee (1/a)}^0t(3/2-H)(H-1/2)(tv+1)^{H-5/2}(-v)^{-H-1/2}Y(-v)^{1/2-\gamma}dv\\
\leq &\;(3/2-H)(H-1/2)TY (1+T/a)^{H-5/2}
\int_{{\varepsilon}_n\vee (1/a)}^0(-v)^{-H-\gamma}dv\\
\leq & \;CY({-\varepsilon}_n)^{1-H-\gamma}\\
= & \;CYn^{-\beta(1-H-\gamma)/(H-1/2)},
\end{align*}
where $C$ is a positive constant.

By Chebyshev's inequality, for $r>0$,
\begin{align*}
I_5&\leq P\left(CYn^{-\beta(1-H-\gamma)/(H-1/2)}>\alpha_n\right)\\
&=P\left(CY>n^{\kappa}(\log n)^{5/2}\right)\\
&\leq \frac{E(|CY|^r)}{n^{r\kappa}(\log n)^{r5/2}},
\end{align*}
where $\kappa=-(1/2-\beta)+ \beta(1-H-\gamma)/(H-1/2)$. Taking $\gamma$ close enough to $0$ we have $H-1/2<(H-1/2)/(1-2\gamma)<\beta<1/2$, and then $\kappa>0$. For $q>0$ there is $r>0$ such that $q<r\kappa$, then
$$
%\begin{equation}
	\lim_{n\to \infty}n^qI_5=0. 
%\end{equation}
$$
\end{proof}

\begin{proof}[\textbf{Proof of Theorem \ref{teoaproxy} for}  $\mathbf{H>1/2}$:]

From (\ref{eqdefY}), (\ref{eqdeffunF}) and (\ref{eq:3.4})  we have
\begin{align*}
    Y(t)&=C\left\{-\int_{-t}^0 (-s)^{H-1/2}dB(s) + \int_{a}^{-t} F_t(s)dB(s) + \int_{-\infty}^{a} F_t(s)dB(s)\right\}\\
    &=C\left\{ -\int_{-t}^0(-s)^{H-1/2}dB_2(s) + \int_{a}^{-t} F_t(s)dB_2(s)+F_t(a)B_2(a)\right.\\
    &\hspace{1cm} - \left.\int_{1/a}^0 \partial_sF_t\left(\frac{1}{v}\right)\frac{1}{v^3}B_3(v)dv\right\}\\
&=C\left\{-\int_{-t}^0 (-s)^{H-1/2}dB_2(s) + \int_{a}^{-t} F_t(s)dB_2(s)+F_t(a)B_2(a)\right. \\
    &\hspace{1cm}\left.- \int_{1/a}^{\varepsilon_n\vee(1/a)} \partial_sF_t\left(\frac{1}{v}\right)\frac{1}{v^3}B_3(v)dv - \int_{\varepsilon_n\vee(1/a)}^0 \partial_sF_t\left(\frac{1}{v}\right)\frac{1}{v^3}B_3(v)dv\right\},
\end{align*}
then the definition of $Y^{(n)}$ (see (\ref{eqdefyn})) implies
\begin{align*}
    &|Y(t)-Y^{(n)}(t)|\leq C\Biggl\{\Biggl.\left|-\int_{-t}^0 (-s)^{H-1/2}dB_2(s)+\int_{-t}^0 (-s)^{H-1/2}dZ^{(n)}_2(s)\right|\\
    &+\left|\int_{a}^{-t} F_t(s)dB_2(s)-\int_{a}^{-t} F_t(s)dZ^{(n)}_2(s)\right|+\left|F_t(a)B_2(a)-F_t(a)Z_2^{(n)}(a)\right|\\
    &+\left|\int_{1/a}^{\varepsilon_n\vee(1/a)} \partial_sF_t\left(\frac{1}{v}\right)\frac{1}{v^3}B_3(v)dv-\int_{_{1/a}}^{0}\biggl(-\int_{1/a}^{[{\varepsilon}_n\vee (1/a)]\wedge s}\partial_sF_t\biggl(\frac{1}{v}\biggr)\frac{1}{v^3}dv\biggr)dZ_3^{(n)}(s)\right|\\
    &+\left|\int_{{\varepsilon}_n\vee (1/a)}^0 \partial_sF_t\left(\frac{1}{v}\right)\frac{1}{v^3}B_3(v)dv \right|\Biggl.\Biggr\}.
\end{align*}
Therefore, taking $\beta$ such that $0<H-1/2<\beta<1/2$,  by  Lemmas \ref{lemB1}, \ref{lemA1}, \ref{lemD1}, \ref{lemF1} and \ref{lemG1} we have the result.
\end{proof}

%%%%%%%%%%%%%%%%%%%%%%%Lemas H<1/2%%%%%%%%%%%%%%%%%%%%%%%%%%%%%%%%%%%%%%%%%%%%%%
\noindent
{\bf 3.2. Case $\mathbf{H<1/2}$}

Let $1/2-H<\beta <1/2$, and  $\varepsilon_n$ and $\alpha_n$ are  as before. We proceed similarly with some lemmas.

\begin{lemma}
\label{lemA32}
For each $q>0$ there is $C$ such that
$$
%\begin{equation}
%\label{eqA32}
J_1=P\left(\sup_{0\leq t \leq T}\left| F_t(a)B_2(a)-F_t(a)Z_2^{(n)}(a)\right|> C\alpha_n \right)= o(n^{-q})\ \ \ \text{as} \ \  n\to \infty.
%\end{equation}
$$
\end{lemma}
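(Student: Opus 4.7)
The plan is to follow essentially the same strategy as in Lemma \ref{lemB1}: bound the quantity pointwise in $t$ by $|F_t(a)|\,\|B_2-Z_2^{(n)}\|_\infty$, obtain a $t$-uniform bound on $|F_t(a)|$, and then invoke the transport approximation estimate (\ref{eq98}) for $B_2$. The reduction
$$
\bigl|F_t(a)B_2(a)-F_t(a)Z_2^{(n)}(a)\bigr|\leq |F_t(a)|\,\|B_2-Z_2^{(n)}\|_\infty
$$
is immediate, so everything comes down to bounding $|F_t(a)|$ uniformly on $[0,T]$.

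The only place where the argument really differs from the $H>1/2$ case is this uniform bound. In Lemma \ref{lemB1} one exploited that $x\mapsto x^{H-1/2}$ is increasing (for $H>1/2$) to conclude $|F_t(a)|\leq (-a)^{H-1/2}$. Here $H-1/2<0$, so that map is decreasing and the analogous inequality reverses. However, the standing assumption $a<-T$ ensures $-t-a\geq -T-a>0$ for every $t\in[0,T]$, so both summands in $F_t(a)=(-t-a)^{H-1/2}-(-a)^{H-1/2}$ remain bounded. A crude triangle inequality gives
$$
|F_t(a)|\leq (-t-a)^{H-1/2}+(-a)^{H-1/2}\leq 2(-T-a)^{H-1/2},
$$
a finite constant depending only on $H$, $T$ and $a$.

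Combining the two displays produces the pathwise bound
$$
\sup_{0\le t\le T}\bigl|F_t(a)B_2(a)-F_t(a)Z_2^{(n)}(a)\bigr|\leq 2(-T-a)^{H-1/2}\|B_2-Z_2^{(n)}\|_\infty.
$$
Since $\alpha_n=n^{-(1/2-\beta)}(\log n)^{5/2}\geq n^{-1/2}(\log n)^{5/2}$, choosing $C$ large enough reduces $J_1$ to $P(\|B_2-Z_2^{(n)}\|_\infty>C' n^{-1/2}(\log n)^{5/2})$, and (\ref{eq98}) with $i=2$ delivers the desired $o(n^{-q})$ bound. I expect no genuine obstacle: the hypothesis $a<-T$ is precisely what removes the potential singularity of $F_t$ at $s=-t$ when $s=a$, and with that in hand the argument is essentially a verbatim copy of Lemma \ref{lemB1}.
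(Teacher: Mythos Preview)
Your proposal is correct and follows essentially the same approach as the paper, which simply writes ``Similar arguments as in the proof of Lemma \ref{lemB1}.'' You have correctly identified the one adjustment needed for $H<1/2$: the uniform bound on $|F_t(a)|$ now uses $(-T-a)^{H-1/2}$ rather than $(-a)^{H-1/2}$, relying on the standing hypothesis $a<-T$. (A minor sharpening: since $x\mapsto x^{H-1/2}$ is decreasing here, $F_t(a)>0$ and one can drop the factor $2$, but this is immaterial.)
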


\begin{proof}
Similar arguments as in the proof of Lemma \ref{lemB1}.
\end{proof}

\begin{lemma}
\label{lemA22} For each $q>0$ there is $C>0$ such that 
\begin{eqnarray*}
%\begin{align}
\label{eqA22} J_2&=&P\left(\sup_{0\leq t \leq
T}\left|
\int_{-t}^{\varepsilon_n\vee (-t)}(-s)^{H-1/2}dB_2(s) -
\int_{-t}^{\varepsilon_n\vee (-t)}(-s)^{H-1/2}dZ_2^{(n)}(s)\right|>C\alpha_n\right)\notag\\
&=& o(n^{-q})\ \ \ \text{as} \ \  n\to \infty.
%\end{align}
\end{eqnarray*}
\end{lemma}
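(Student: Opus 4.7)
The plan is to adapt the integration-by-parts technique of Lemma~\ref{lemA1} to the singular integrand $f(s):=(-s)^{H-1/2}$, which for $H<1/2$ blows up as $s\uparrow 0$. First I would observe that when $t\le -\varepsilon_n$ the two limits of integration coincide, so both integrals vanish and the supremum may be restricted to $t\in(-\varepsilon_n,T]$. On that range the integration interval is $[-t,\varepsilon_n]\subset[a,0)$ (for $n$ large), which stays at distance at least $|\varepsilon_n|$ from the singularity at $0$; on it $f$ and $f'(s)=(1/2-H)(-s)^{H-3/2}$ are smooth and bounded.

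Next I would apply pathwise integration by parts to each of the two integrals---valid since $f\in C^1([-t,\varepsilon_n])$ and $B_2$ is continuous while $Z_2^{(n)}$ has bounded variation---and subtract, yielding
\begin{align*}
\left|\int_{-t}^{\varepsilon_n}\! f(s)\,d(B_2-Z_2^{(n)})(s)\right|
\le \|B_2-Z_2^{(n)}\|_{\infty}\!\left[|f(\varepsilon_n)|+|f(-t)|+\int_{-t}^{\varepsilon_n}\!|f'(s)|\,ds\right].
\end{align*}
Since $t>-\varepsilon_n$, we have $|f(\varepsilon_n)|=(-\varepsilon_n)^{H-1/2}$ and $|f(-t)|=t^{H-1/2}\le(-\varepsilon_n)^{H-1/2}$, and an elementary antiderivative of $(-s)^{H-3/2}$ gives $\int_{-t}^{\varepsilon_n}|f'(s)|\,ds=(-\varepsilon_n)^{H-1/2}-t^{H-1/2}\le(-\varepsilon_n)^{H-1/2}$. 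Consequently the difference is bounded uniformly in $t$ by $3(-\varepsilon_n)^{H-1/2}\|B_2-Z_2^{(n)}\|_{\infty}$.

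Finally, by (\ref{eqdefepsilon}) we have $(-\varepsilon_n)^{H-1/2}=n^{\beta}$, so the event defining $J_2$ is contained in $\{\|B_2-Z_2^{(n)}\|_{\infty}>(C/3)\,n^{-\beta}\alpha_n\}=\{\|B_2-Z_2^{(n)}\|_{\infty}>(C/3)\,n^{-1/2}(\log n)^{5/2}\}$, whose probability is $o(n^{-q})$ by (\ref{eq98}).

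The only delicate point is bookkeeping: the singular factor $n^{\beta}$ produced by evaluating $f$ at the truncation $\varepsilon_n$ must be exactly absorbed by the slack $n^{\beta}$ that the target rate $\alpha_n=n^{-(1/2-\beta)}(\log n)^{5/2}$ leaves against the canonical Brownian-transport rate $n^{-1/2}(\log n)^{5/2}$ of Theorem~\ref{T1.1}. This matching is precisely why $\varepsilon_n$ was defined with the exponent $-\beta/|H-1/2|$, and it explains why no $H$-dependent lower bound on $\beta$ is needed in this particular lemma; the condition $1/2-H<\beta$ will presumably enter only in subsequent estimates where an actual Hölder exponent, rather than a boundary truncation, must be dominated.
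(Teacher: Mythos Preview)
Your argument is correct and follows essentially the same route as the paper: restrict to $t>-\varepsilon_n$, integrate by parts, bound the boundary and derivative terms by a constant times $(-\varepsilon_n)^{H-1/2}\|B_2-Z_2^{(n)}\|_\infty$, and then absorb $(-\varepsilon_n)^{H-1/2}=n^{\beta}$ into $\alpha_n$ before invoking (\ref{eq98}). The only cosmetic difference is that the paper keeps $t^{H-1/2}$ explicit and obtains the sharper constant $2$ instead of your $3$.
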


\begin{proof}
By integration by parts,
\begin{eqnarray*}
%\begin{align*}
%\label{eqdemA22-1}
\int_{-t}^{\varepsilon_n\vee (-t)}(-s)^{H-1/2}dB_2(s)&=&I_{\{\varepsilon_n> -t\}}\biggl[\int_{-t}^{\varepsilon_n}(-s)^{H-1/2}dB_2(s)\biggr]\notag\\
&=&I_{\{\varepsilon_n> -t\}}\biggl[(-\varepsilon_n)^{H-1/2}B_2(\varepsilon_n)-t^{H-1/2}B_2(-t)\biggr.\notag\\
 &&\hspace{1.5cm}\biggl.  +\int_{-t}^{\varepsilon_n}(H-1/2)(-s)^{H-3/2}B_2(s)ds\biggr],
%\end{align*}
\end{eqnarray*}

and analogously, 
\begin{align*}
%\label{eqdemA22-2}
&\int_{-t}^{\varepsilon_n\vee (-t)}(-s)^{H-1/2}dZ_2^{(n)}(s)=I_{\{\varepsilon_n> -t\}}\biggl[-(\varepsilon_n)^{H-1/2}Z_2^{(n)}(\varepsilon_n) -t^{H-1/2}Z_2^{(n)}(-t)\biggr.\notag\\
&\biggl.\hspace{6.7cm}+\int_{-t}^{\varepsilon_n}(H-1/2)(-s)^{H-3/2}Z_2^{(n)}(s)ds\biggr]\notag .
\end{align*}
We have
\begin{align*}
%\label{eqdemA22-3}
&\left|\int_{-t}^{\varepsilon_n\vee (-t)}(-s)^{H-1/2}dB_2(s) - \int_{-t}^{\varepsilon_n\vee (-t)}(-s)^{H-1/2}dZ_2^{(n)}(s)\right|\notag\\
&\leq I_{\{\varepsilon_n> -t\}}\|B_2-Z^{(n)}_2\|_{\infty}\biggl[(-\varepsilon_n)^{H-1/2}+ t^{H-1/2}+\int_{-t}^{\varepsilon_n}(1/2-H)(-s)^{H-3/2}ds\biggr]\notag\\
&\leq 2\|B_2-Z^{(n)}_2\|_{\infty}{n^{\beta}}.\notag 
\end{align*}
Then,
\begin{align*}
%\label{eq90}
J_2&\leq P\left(2\|B_2-Z^{(n)}_2\|_{\infty}n^{\beta} >C\alpha_n\right)\leq P\left(\|B_2-Z^{(n)}_2\|_{\infty} >Cn^{-1/2}(\log n)^{5/2}\right)=o(n^{-q}).\notag
\end{align*}
\end{proof}

\begin{lemma}
\label{lemD22} For $1/2-H< \beta < 1/2$ and each $q>0$, 
\begin{eqnarray*}
%\begin{align*}
\label{eqD22}
J_3&=&P\left(\sup_{0\leq t \leq T}\left|\int_{\varepsilon_n\vee (-t)}^0 [(-s)^{H-1/2}- (-s-\varepsilon_n)^{H-1/2}]dB_2(s) \right|> \alpha_n \right)\notag\\
&=&o(n^{-q}) \ \ \ \text{as} \ \  n\to \infty.
%\end{align*}
\end{eqnarray*}
\end{lemma}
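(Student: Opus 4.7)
The strategy is to realize
\begin{equation*}
X(t)\;:=\;\int_{\varepsilon_n\vee(-t)}^{0}K_n(s)\,dB_2(s),\qquad K_n(s):=(-s)^{H-1/2}-(-s-\varepsilon_n)^{H-1/2},
\end{equation*}
as a continuous Gaussian martingale in $t$ and to bound $\sup_{t\in[0,T]}|X(t)|$ via the exponential martingale inequality, rather than via chaining or Doob's $L^p$ inequality (which would give only polynomial, not super-polynomial, decay).

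First I would observe that for $t\geq -\varepsilon_n$ the lower limit of integration equals $\varepsilon_n$, so the integrand does not depend on $t$ and $X(t)$ is constant; for $n$ large enough that $-\varepsilon_n<T$, the supremum therefore reduces to $\sup_{0\leq t\leq -\varepsilon_n}|X(t)|$. On this shorter interval $X(t)=\int_{-t}^{0}K_n(s)\,dB_2(s)$ is a martingale with respect to $\mathcal{F}_t=\sigma(B_2(s):-t\leq s\leq 0)$ (by independence of Brownian increments on disjoint intervals), and its quadratic variation $\langle X\rangle_t=\int_{-t}^{0}K_n(s)^2\,ds$ is deterministic.

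The key estimate is a uniform bound on the quadratic variation. Since $H<1/2$ and $-s-\varepsilon_n>-s>0$ on $[\varepsilon_n,0)$, the monotonicity of $x\mapsto x^{H-1/2}$ gives $0\leq K_n(s)\leq (-s)^{H-1/2}$. Writing $\delta:=-\varepsilon_n$,
\begin{equation*}
\langle X\rangle_t\;\leq\;\int_{\varepsilon_n}^{0}(-s)^{2H-1}\,ds\;=\;\frac{\delta^{2H}}{2H},\qquad t\in[0,-\varepsilon_n].
\end{equation*}
The exponential inequality for continuous martingales with bounded deterministic quadratic variation then yields
\begin{equation*}
J_3\;\leq\;2\exp\!\Bigl(-\tfrac{H\alpha_n^2}{\delta^{2H}}\Bigr).
\end{equation*}
Inserting $\delta=n^{-\beta/(1/2-H)}$ and $\alpha_n=n^{-(1/2-\beta)}(\log n)^{5/2}$ gives $\alpha_n^2/\delta^{2H}=n^{\rho}(\log n)^5$ with $\rho=2\beta H/(1/2-H)-(1-2\beta)$; an elementary algebraic check shows $\rho>0$ precisely when $\beta>1/2-H$, which is a standing hypothesis of this subsection. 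Hence $J_3$ decays faster than any power of $n$, so $J_3=o(n^{-q})$ for every $q>0$.

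The main obstacle I anticipate is the kernel bound $K_n(s)\leq(-s)^{H-1/2}$: the naive mean-value estimate $|K_n(s)|\lesssim|\varepsilon_n|(-s)^{H-3/2}$ produces a non-integrable singularity at $s=0$, so one cannot mimic the Lemma \ref{lemF1} argument. One has to use instead the sign and monotonicity of $x\mapsto x^{H-1/2}$ to absorb the singularity into a single integrable power; this is exactly what makes the regime $H<1/2$ work and what links the exponent condition $\beta>1/2-H$ to the integrability of $u^{2H-1}$ near $0$.
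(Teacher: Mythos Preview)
Your argument is correct and takes a genuinely different route from the paper. The paper works pathwise: it writes $K_n(s)=\int_{-s}^{-s-\varepsilon_n}(1/2-H)x^{H-3/2}\,dx$, swaps the order of integration, and uses the H\"older continuity of $B_2$ (with exponent $1/2-\gamma$, $0<\gamma<H$) to bound the Brownian increments appearing after the swap; a case analysis shows the increment length is at most $2x$, and this leads to a pathwise estimate $|X(t)|\le C\,Y(-\varepsilon_n)^{H-\gamma}$ for a random variable $Y$ with all moments finite, after which Chebyshev with large $r$ gives $o(n^{-q})$.

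Your approach instead exploits the Wiener-integral structure: you recognise $X$ as a continuous Gaussian martingale with deterministic bracket, bound that bracket by $\delta^{2H}/(2H)$ via the monotonicity estimate $0\le K_n(s)\le(-s)^{H-1/2}$, and apply the exponential (Bernstein-type) maximal inequality. This is shorter, avoids the Fubini/case-analysis step entirely, and in fact yields a stronger (stretched-exponential) tail bound. The paper's method has the advantage of being purely pathwise and of fitting the common template used for Lemmas~\ref{lemG1}, \ref{lemK22} and \ref{lemL22}; yours is specific to Gaussian/Wiener integrals but is more direct here. Your observation that the mean-value bound $|K_n(s)|\lesssim|\varepsilon_n|(-s)^{H-3/2}$ fails because $(-s)^{2H-3}$ is non-integrable at $0$ is exactly the point, and the paper circumvents the same obstacle by the Fubini rewriting rather than by your monotonicity bound.
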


\begin{proof} By the H\"older continuity of  $B_2$  with $0<\gamma<H$,
\begin{align}
\label{eqD22-1}
&\left|\int_{\varepsilon_n\vee (-t)}^0 [(-s)^{H-1/2}- (-s-\varepsilon_n)^{H-1/2}]dB_2(s)\right|=\left|\int_{\varepsilon_n\vee (-t)}^0 \int_{-s}^{-s-\varepsilon_n} (1/2-H)x^{H-3/2}dxdB_2(s)\right|\notag\\ 
&=\left|\int_0^{-\varepsilon_n-(\varepsilon_n\vee(-t))}\int_{(-t\vee\varepsilon_n)\vee (-x)}^{(-x-\varepsilon_n)\wedge 0}(1/2-H)x^{H-3/2}dB_2(s)dx\right|\notag\\
&=\left|\int_0^{-\varepsilon_n-(\varepsilon_n\vee(-t))}(1/2-H)x^{H-3/2}[B_2((-x-\varepsilon_n)\wedge 0)-B_2((-t\vee\varepsilon_n)\vee (-x))]dx\right|\notag\\
&\leq (1/2-H)Y\int_0^{-\varepsilon_n-(\varepsilon_n\vee(-t))}x^{H-3/2}A_1^{1/2-\gamma}(x)dx,
\end{align}
where 
$$
%\begin{equation}
	A_1(x)=|(-x-\varepsilon_n)\wedge 0- ((-t)\vee\varepsilon_n\vee (-x))|.
%\end{equation}
$$
First, if $0\leq -x-\varepsilon_n$, then $A_1(x)=| (-t)\vee(-x)|$, and if $t<x$, then
\begin{equation}
\label{eqA11}
	A_1(x)=t<x<2x.
\end{equation}
If $t\geq x$, then
\begin{equation}
\label{eqA12}
	A_1(x)=x<2x.
\end{equation}
Second, if $0> -x-\varepsilon_n$, then 
\begin{eqnarray*}
A_1(x)&=&|-x-\varepsilon_n- ((-t)\vee \varepsilon_n\vee(-x))|=	|-x-\varepsilon_n- ((-t)\vee \varepsilon_n)|\\
&=&-x-\varepsilon_n- ((-t)\vee \varepsilon_n) \leq -\varepsilon_n+(t\wedge(-\varepsilon_n)).
\end{eqnarray*}
 If $t<-\varepsilon_n$, then
\begin{equation}
\label{eqA13}
	A_1(x)\leq -\varepsilon_n+t\leq -2\varepsilon_n<2x,
\end{equation}
  and if $t\geq -\varepsilon_n$, then
\begin{equation}
\label{eqA14}
	A_1(x)= -\varepsilon_n-\varepsilon_n\leq -2\varepsilon_n<2x.
\end{equation}
From (\ref{eqA11})-(\ref{eqA14}) we have that $A_1(x)\leq 2x$, and then from (\ref{eqD22-1}),
\begin{align*}
	&\kern -.26cm\left|\int_{\varepsilon_n\vee (-t)}^0 [(-s)^{H-1/2}- (-s-\varepsilon_n)^{H-1/2}]dB_2(s)\right|\\
\leq&\;\; (1/2-H)2^{1/2-\gamma}Y\int_0^{-\varepsilon_n-(\varepsilon_n\vee(-t))}x^{H-1-\gamma}dx\\
	=&\;\; \frac{(1/2-H)2^{1/2-\gamma}}{H-\gamma}Y({-\varepsilon_n-(\varepsilon_n\vee(-t))})^{H-\gamma}\\\
	\leq&\;\; \left.\frac{(1/2-H)2^{H-2\gamma+1/2}}{H-\gamma}Y
({-\varepsilon_n})^{H-\gamma}.\right.
	\end{align*}
Hence
\begin{align*}
J_3&\leq P\left(\frac{(1/2-H)2^{H-2\gamma+1/2}}{H-\gamma}Y({-\varepsilon_n})^{H-\gamma}>\alpha_n\right)\\
&=\;\;P\left(CY>({-\varepsilon_n})^{-H+\gamma}\alpha_n\right)\\
&=\;\;P\left(CY>n^{\kappa}(\log n)^{5/2}\right)\\
&\leq \;\;\frac{E(\left|CY\right|^r)}{n^{r\kappa}(\log n)^{r5/2}},
\end{align*}
where  $\kappa=-(1/2-\beta)- \beta(H-\gamma)/(H-1/2)$. Taking  $\gamma$ close enough to $0$ we have $0<(1/2-H)/(1-2\gamma)<\beta<1/2$, and then $\kappa>0$. 
The result follows by analogous arguments as in proof of the Lemma \ref{lemG1}. 
\end{proof}

\begin{lemma}
\label{lemF22} For each $q>0$ there is $C$ such that

\begin{align*}
%\label{eqF22} 
J_4&= P\left(\sup_{0\leq t \leq T}\left|
\int^0_{\varepsilon_n\vee (-t)}(-s-\varepsilon_n)^{H-1/2}dB_2(s)-\int^0_{\varepsilon_n\vee (-t)}(-s-\varepsilon_n)^{H-1/2}dZ_2^{(n)}(s)\right|>C\alpha_n \right)\notag\\
& = o(n^{-q}) \ \ \ \text{as} \ \  n\to \infty.
\end{align*}

\end{lemma}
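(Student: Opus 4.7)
The plan is to follow the template of Lemma~\ref{lemA22}: apply integration by parts to both stochastic integrals, absorb everything into $\|B_2-Z_2^{(n)}\|_\infty$ times a deterministic factor that grows no faster than $n^\beta$, and then invoke~(\ref{eq98}). The key observation that makes the present case a little cleaner than Lemma~\ref{lemA22} is that the shift by $\varepsilon_n$ in the kernel $f(s):=(-s-\varepsilon_n)^{H-1/2}$ removes the singularity: on $[\varepsilon_n\vee(-t),0]$ we have $-s-\varepsilon_n\geq -\varepsilon_n>0$, so $f$ is smooth, strictly positive, and (since $H<1/2$) monotone increasing in $s$, with maximum $f(0)=(-\varepsilon_n)^{H-1/2}=n^{\beta}$ by~(\ref{eqdefepsilon}).

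Writing $u=\varepsilon_n\vee(-t)$ and using $B_2(0)=Z_2^{(n)}(0)=0$, the boundary term at $s=0$ drops out of integration by parts:
$$\int_u^0 f(s)\,dB_2(s) = -f(u)B_2(u) - \int_u^0 f'(s)B_2(s)\,ds,$$
with the analogous identity for $dZ_2^{(n)}$. Subtracting and taking absolute values, the difference is bounded by
$$\|B_2-Z_2^{(n)}\|_\infty\Bigl(|f(u)|+\int_u^0 |f'(s)|\,ds\Bigr).$$
Because $f'\geq 0$, the integral telescopes to $f(0)-f(u)$, and the whole bracket collapses to $f(0)=n^\beta$, uniformly in $t\in[0,T]$ and in the two cases $u=\varepsilon_n$ and $u=-t$.

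Combining the above gives the uniform estimate $\sup_{0\leq t\leq T}|\cdots|\leq n^\beta\,\|B_2-Z_2^{(n)}\|_\infty$. Since $\alpha_n/n^\beta=n^{-1/2}(\log n)^{5/2}$, the inclusion $\{n^\beta\|B_2-Z_2^{(n)}\|_\infty>C\alpha_n\}\subset\{\|B_2-Z_2^{(n)}\|_\infty>Cn^{-1/2}(\log n)^{5/2}\}$ and~(\ref{eq98}) finish the proof. There is no real obstacle here; the only substantive point is to check that the $\varepsilon_n$-shift keeps the kernel bounded on the reduced domain, so no H\"older regularity of $B_2$ at the origin needs to be invoked and the proof is a direct (slightly simpler) analogue of Lemma~\ref{lemA22}.
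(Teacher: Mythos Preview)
Your proof is correct and follows essentially the same approach as the paper: integrate by parts (using $B_2(0)=Z_2^{(n)}(0)=0$ to drop the upper boundary term), bound the difference by $\|B_2-Z_2^{(n)}\|_\infty$ times the deterministic bracket, observe that the bracket collapses to $(-\varepsilon_n)^{H-1/2}=n^\beta$, and apply~(\ref{eq98}). The only cosmetic difference is that the paper computes $\int_u^0 (1/2-H)(-s-\varepsilon_n)^{H-3/2}\,ds$ explicitly, while you phrase the same cancellation via the monotonicity $f'\geq 0$; the resulting bound $\|B_2-Z_2^{(n)}\|_\infty\, n^\beta$ is identical.
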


\begin{proof} By integration by parts
\begin{align*}
%\label{eqdemF22-1} 
&\int^0_{\varepsilon_n\vee (-t)}(-s-\varepsilon_n)^{H-1/2}dB_2(s)\\
&=-(-(\varepsilon_n\vee (-t))-\varepsilon_n)^{H-1/2}B_2(\varepsilon_n\vee (-t))
+\int^0_{\varepsilon_n\vee (-t)}(H-1/2)(-s-\varepsilon_n)^{H-3/2}B_2(s)ds,
\end{align*}
and

\begin{align*}
%\label{eqdemF22-2}
 &\kern -.29cm \int^0_{\varepsilon_n\vee (-t)}(-s-\varepsilon_n)^{H-1/2}dZ_2^{(n)}(s)\notag\\
=&-(-(\varepsilon_n\vee (-t))-\varepsilon_n)^{H-1/2}Z_2^{(n)}(\varepsilon_n\vee (-t))
+\int^0_{\varepsilon_n\vee (-t)}(H-1/2)(-s-\varepsilon_n)^{H-3/2}Z_2^{(n)}(s)ds.
\end{align*}
Then 
\begin{align*}
%\label{eqdemF22-4}
&\left|
\int^0_{\varepsilon_n\vee (-t)}(-s-\varepsilon_n)^{H-1/2}dB_2(s)-\int^0_{\varepsilon_n\vee (-t)}(-s-\varepsilon_n)^{H-1/2}dZ_2^{(n)}(s)\right|\notag\\
&\leq\|B_2-Z^{(n)}_2\|_{\infty}\biggl[(-(\varepsilon_n\vee (-t))-\varepsilon_n)^{H-1/2}+ \int^0_{\varepsilon_n\vee (-t)}(1/2-H)(-s-\varepsilon_n)^{H-3/2}ds \biggr]\notag\\
&=\|B_2-Z^{(n)}_2\|_{\infty}(-\varepsilon_n)^{H-1/2}\notag\\
&=\|B_2-Z^{(n)}_2\|_{\infty}n^\beta.
\end{align*}
Finally,
\begin{align*}
J_4\leq &\;P\left(\|B_2-Z^{(n)}_2\|_{\infty}n^{\beta}>C\alpha_n \right)=P\left(\|B_2-Z^{(n)}_2\|_{\infty}> Cn^{-1/2}(\log n)^{5/2} \right)\\
&\kern -.33cm =o(n^{-q}).
\end{align*}
\end{proof}

\begin{lemma}
\label{lemG22} For each $q>0$ there is $C$ such that 
\begin{align*}
J_5&=P\left(\sup_{0\leq t \leq T}\right.\left|\int^0_{1/a} \partial_sF_t\left(\frac{1}{v}\right)\frac{1}{v^3}B_3(v)dv \right.\\ 
&\hspace{2.3cm}- \left.\left.\int^0_{1/a}\left(-\int_{{1/a}}^{s}\partial_sF_t\left(\frac{1}{v}\right)\frac{1}{v^3}dv\right)dZ_3^{(n)}(s)\right|> C\alpha_n \right)\\
&= o(n^{-q}) \ \ \ \text{as} \ \  n\to \infty.
\end{align*}
\end{lemma}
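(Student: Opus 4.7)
The plan is to follow the same strategy as in Lemma \ref{lemF1} (the $H>1/2$ case): apply Fubini's theorem to rewrite the integral against $dZ_3^{(n)}$ as an ordinary Lebesgue integral against the path $Z_3^{(n)}(\cdot)$, so that the difference collapses to a single integral of $B_3-Z_3^{(n)}$ against a deterministic weight. The essential simplification here, compared with Lemma \ref{lemF1}, is that for $H<1/2$ the weight $|\partial_sF_t(1/v)/v^3|$ is already integrable on all of $[1/a,0]$, so no $\varepsilon_n$ cut-off is needed and a single lemma (with no analogue of Lemma \ref{lemG1}) suffices.

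Set $\phi_t(v)=\partial_sF_t(1/v)\cdot v^{-3}$. Since $Z_3^{(n)}(0)=0$ (the transport process starts at the origin), Fubini's theorem yields, exactly as in the computation in Lemma \ref{lemF1},
\begin{equation*}
\int_{1/a}^0\Bigl(-\int_{1/a}^s\phi_t(v)\,dv\Bigr)dZ_3^{(n)}(s)=\int_{1/a}^0\phi_t(v)\,Z_3^{(n)}(v)\,dv,
\end{equation*}
so the quantity inside the supremum in $J_5$ equals
\begin{equation*}
\biggl|\int_{1/a}^0\phi_t(v)\bigl[B_3(v)-Z_3^{(n)}(v)\bigr]dv\biggr|\leq \|B_3-Z_3^{(n)}\|_\infty\int_{1/a}^0|\phi_t(v)|\,dv.
\end{equation*}

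The key technical step is to bound $\int_{1/a}^0|\phi_t(v)|\,dv$ by a constant $K=K(a,T,H)$ uniformly in $t\in[0,T]$. Using (\ref{eqpropF2}) together with the substitution $s=1/v$, this quantity is, up to the prefactor $|H-1/2|\,t\,(3/2-H)$, equal to $\int_{-\infty}^a(-t-s)^{H-5/2}(-s)\,ds$. An explicit antiderivative gives
\begin{equation*}
\int_{-\infty}^a(-t-s)^{H-5/2}(-s)\,ds=\frac{t(-t-a)^{H-3/2}}{3/2-H}+\frac{(-t-a)^{H-1/2}}{1/2-H},
\end{equation*}
which is finite precisely because $H<1/2$ makes both exponents negative enough as $s\to-\infty$. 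Each summand is easily seen to be monotonically increasing in $t$, so its value at $t=T$ furnishes the claimed uniform bound.

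Combining the two estimates, the quantity inside the supremum in $J_5$ is bounded by $K\|B_3-Z_3^{(n)}\|_\infty$ uniformly in $t\in[0,T]$. Since $\beta>0$ forces $\alpha_n\geq n^{-1/2}(\log n)^{5/2}$, choosing $C$ large enough that $C\alpha_n/K\geq C^{(3)}n^{-1/2}(\log n)^{5/2}$ and invoking (\ref{eq98}) yields $J_5=o(n^{-q})$. I do not expect any serious analytical obstacle: the argument is a streamlined version of Lemma \ref{lemF1}, with the $\varepsilon_n$-splitting and the companion Lemma \ref{lemG1} rendered unnecessary by the integrability of $\phi_t$ all the way to $v=0$ when $H<1/2$.
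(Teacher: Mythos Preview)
Your proposal is correct and follows essentially the same route as the paper: apply Fubini (using $Z_3^{(n)}(0)=0$) to reduce to $\|B_3-Z_3^{(n)}\|_\infty\int_{1/a}^0|\phi_t(v)|\,dv$, then bound the integral uniformly in $t$ via (\ref{eqpropF2}) and invoke (\ref{eq98}). The only cosmetic difference is that the paper bounds $(tv+1)^{H-5/2}\le (T/a+1)^{H-5/2}$ first and integrates the remaining power $(-v)^{-1/2-H}$, whereas you substitute back to $s$ and evaluate the integral exactly before taking $t=T$; both give a finite constant since $H<1/2$.
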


\begin{proof}By the Fubini's theorem we have
$$\int^0_{1/a}\left(-\int_{1/a}^{s}\partial_sF_t\left(\frac{1}{v}\right)\frac{1}{v^3}dv\right)dZ_3^{(n)}(s)=\int^0_{1/a} \partial_sF_t\left(\frac{1}{v}\right)\frac{1}{v^3}Z^{(n)}_3(v)dv,$$ 
then, by Lemma \ref{lemprop}, 
\begin{align*}
&\left|\int^0_{1/a} \partial_sF_t\left(\frac{1}{v}\right)\frac{1}{v^3}B_3(v)dv - \int^0_{1/a}\left(-\int_{{1/a}}^{s}\partial_sF_t\left(\frac{1}{v}\right)\frac{1}{v^3}dv\right)dZ_3^{(n)}(s)\right|\\
&\leq \|B_3-Z_3^{(n)}\|_{\infty}\int^0_{1/a} \left|\partial_sF_t
\left(\frac{1}{v}\right)\frac{1}{v^3}\right|dv\\
&\leq \|B_3-Z_3^{(n)}\|_{\infty}\int^0_{1/a} t(3/2-H)(1/2-H)(tv+1)^{H-5/2}(-v)^{-1/2-H}dv \\
&\leq \|B_3-Z_3^{(n)}\|_{\infty}t(3/2-H)(t/a+1)^{H-5/2}(1/2-H)\int^0_{1/a} (-v)^{-1/2-H}dv \\
&\leq \|B_3-Z_3^{(n)}\|_{\infty}T(3/2-H)(T/a+1)^{H-5/2}(-1/a)^{1/2-H}. \\
\end{align*}
Therefore,
$$
%\begin{align*}
%\label{eqdemG22-3}
J_5 \leq  P\left(\|B_3-Z_3^{(n)}\|_{\infty}>C\alpha_n \right)=o(n^{-q}).
$$
%\end{align*}
\end{proof}

\begin{lemma}
\label{lemJ22} For each $q>0$ there is $C$ such that 
\begin{align*}
J_6&=P\left(\sup_{0\leq t \leq T}\left|\int^{a\vee(-t+\varepsilon_n)}_{a} F_t(s)dB_2(s) -\int^{a\vee(-t+\varepsilon_n)}_{a} F_t(s)dZ_2^{(n)}(s)\right|> C\alpha_n \right)\\
&= o(n^{-q}) \ \ \ \text{as} \ \  n\to \infty.
\end{align*}
\end{lemma}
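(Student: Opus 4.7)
The plan is to mimic the proof of Lemma \ref{lemD1}, but taking into account that the upper limit is now $a\vee(-t+\varepsilon_n)$ and that in the regime $H<1/2$ the function $F_t$ has a singularity at $s=-t$; the role of $\varepsilon_n$ is precisely to keep the integration bounded away from that singularity so that integration by parts remains legitimate.

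First I would observe that since $a<-T\leq -t$ and $|\varepsilon_n|\to 0$, for all $n$ large enough we have $a\vee(-t+\varepsilon_n)=-t+\varepsilon_n$ uniformly in $t\in[0,T]$; in the remaining case the interval of integration degenerates and the difference is zero. Then I would apply integration by parts to both $\int_a^{-t+\varepsilon_n} F_t(s)\,dB_2(s)$ and $\int_a^{-t+\varepsilon_n} F_t(s)\,dZ_2^{(n)}(s)$, subtract, and bound by $\|B_2-Z_2^{(n)}\|_\infty$ times a boundary/total-variation factor:
\begin{align*}
&\left|\int_a^{-t+\varepsilon_n} F_t(s)\,dB_2(s) - \int_a^{-t+\varepsilon_n} F_t(s)\,dZ_2^{(n)}(s)\right|\\
&\qquad\leq \|B_2-Z_2^{(n)}\|_\infty\left[|F_t(-t+\varepsilon_n)|+|F_t(a)|+\int_a^{-t+\varepsilon_n}|\partial_sF_t(s)|\,ds\right].
\end{align*}

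The key estimate is that the bracketed factor is at most a constant times $n^\beta$. Since for $H<1/2$ and $s<-t$ one has $\partial_s F_t(s)=(H-1/2)[(-s)^{H-3/2}-(-t-s)^{H-3/2}]\geq 0$, the integral telescopes by the fundamental theorem of calculus to $F_t(-t+\varepsilon_n)-F_t(a)$. Together with the two boundary terms and using $F_t\geq 0$ (which follows from $(-t-s)^{H-1/2}\geq (-s)^{H-1/2}$ when $H<1/2$), the bracket reduces to $2F_t(-t+\varepsilon_n)\leq 2(-\varepsilon_n)^{H-1/2}=2n^{\beta}$, the last equality being the definition (\ref{eqdefepsilon}).

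Consequently the difference is bounded by $2n^{\beta}\|B_2-Z_2^{(n)}\|_\infty$ uniformly in $t\in[0,T]$, so that
$$
J_6\leq P\bigl(\|B_2-Z_2^{(n)}\|_\infty > Cn^{-1/2}(\log n)^{5/2}\bigr),
$$
which is $o(n^{-q})$ by (\ref{eq98}). The only mildly delicate point is checking the sign of $\partial_sF_t$ in the regime $H<1/2$ so that the $L^1$ norm of $\partial_sF_t$ collapses to a boundary difference; once that is in place the estimate is of the same clean type as in Lemma \ref{lemD1}, the $n^\beta$ blow-up coming entirely from the closest-to-singularity endpoint $-t+\varepsilon_n$ and being exactly absorbed by the $n^{-1/2}(\log n)^{5/2}$ rate of the transport approximation to yield $\alpha_n$.
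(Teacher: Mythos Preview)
Your proof is correct and follows essentially the same route as the paper: integration by parts, bounding by $\|B_2-Z_2^{(n)}\|_\infty$ times $|F_t(\cdot)|+|F_t(a)|+\int|\partial_sF_t|$, then using the sign of $F_t$ and $\partial_sF_t$ (for $H<1/2$) so that the bracket collapses to $2F_t$ at the upper endpoint, which is bounded by $2(-\varepsilon_n)^{H-1/2}=2n^\beta$. The only cosmetic difference is that you first dispose of the case $a\vee(-t+\varepsilon_n)=a$ by observing it cannot occur for large $n$, while the paper carries $a\vee(-t+\varepsilon_n)$ throughout and ends with the bound $2\bigl((-\varepsilon_n)^{H-1/2}+(-T-a)^{H-1/2}\bigr)$; both lead to the same conclusion via (\ref{eq98}).
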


\begin{proof}
By integration by parts,

\begin{align*}
\int^{a\vee(-t+\varepsilon_n)}_{a} F_t(s)dB_2(s)
&=F_t(a\vee(-t+\varepsilon_n))B_2(a\vee(-t+\varepsilon_n))-F_t(a)B_2(a)\\
&\ \ \ \ -\int^{a\vee(-t+\varepsilon_n)}_{a} \partial_sF_t(s)B_2(s)ds,
\end{align*}
and
\begin{align*}
\int^{a\vee(-t+\varepsilon_n)}_{a} F_t(s)dZ_2^{(n)}(s)&=F_t(a\vee(-t+\varepsilon_n))Z_2^{(n)}(a\vee(-t+\varepsilon_n))\\
&\ \ \ \ -F_t(a)Z_2^{(n)}(a) -\int^{a\vee(-t+\varepsilon_n)}_{a} \partial_sF_t(s)Z_2^{(n)}(s)ds.
\end{align*}
Then
\begin{align*}
&\kern -.3cm\left|\int^{a\vee(-t+\varepsilon_n)}_{a} F_t(s)dB_2(s)-\int^{a\vee(-t+\varepsilon_n)}_{a} F_t(s)dZ_2^{(n)}(s)\right|\\
\leq&\;\;  \|B_2-Z^{(n)}_2\|_{\infty} \left[|F_t(a\vee(-t+\varepsilon_n))|+
|F_t(a)|+\int^{a\vee(-t+\varepsilon_n)}_{a}| \partial_sF_t(s)|ds\right]\\
=&\;\; \|B_2-Z^{(n)}_2\|_{\infty}
\biggl[(-t-(a\vee(-t+\varepsilon_n)))^{H-1/2}-(-(a\vee(-t+\varepsilon_n)))^{H-1/2}\biggr.\\
&\biggl.+(-t-a)^{H-1/2}-(-a)^{H-1/2}+\int^{a\vee(-t+\varepsilon_n)}_{a}(1/2-H)[(-t-s)^{H-3/2}-(-s)^{H-3/2}]ds\biggr]\\
=&\;\;  \|B_2-Z^{(n)}_2\|_{\infty}
2\biggl[(-t-(a\vee(-t+\varepsilon_n)))^{H-1/2}-(-(a\vee(-t+\varepsilon_n)))^{H-1/2}\biggr]\\
\leq&\;\;\|B_2-Z^{(n)}_2\|_{\infty}2((-\varepsilon_n)^{H-1/2}+(-T-a)^{H-1/2}).
\end{align*}
Hence the result follows.
\end{proof}

\begin{lemma}
\label{lemJ23} For each $q>0$ there is $C$ such that 
\begin{align*}
J_7&=P\left(\sup_{0\leq t \leq T}\left|I_{\left\{-\varepsilon_n\leq t \right\}}
\int^{-t}_{a\vee (-t+\varepsilon_n)} F_{t+\varepsilon_n}(s)dB_2(s)\right.\right.\\
&\hspace{2cm}\left.\left.-I_{\left\{-\varepsilon_n\leq t \right\}}
\int^{-t}_{a\vee (-t+\varepsilon_n)} F_{t+\varepsilon_n}(s)dZ_2^{(n)}(s)\right|> C\alpha_n \right)\\
&= o(n^{-q}) \ \ \ \text{as} \ \  n\to \infty.
\end{align*}
\end{lemma}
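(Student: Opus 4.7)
The plan is to mirror the integration-by-parts argument of Lemma \ref{lemJ22}, adapted to the kernel $F_{t+\varepsilon_n}$ in place of $F_t$ and to the shifted integration range $[a\vee(-t+\varepsilon_n),\,-t]$. Note first that on the support of $I_{\{-\varepsilon_n\leq t\}}$ one has $t+\varepsilon_n\geq 0$, and since $\varepsilon_n<0$ every $s$ in the integration range satisfies $s\leq -t<-(t+\varepsilon_n)$, so the kernel $F_{t+\varepsilon_n}(s)$ defined by (\ref{eqdeffunF}) is well-defined throughout.

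I would then apply integration by parts to both Stieltjes integrals, subtract, and absorb the differences of $B_2$ and $Z_2^{(n)}$ into $\|B_2-Z_2^{(n)}\|_\infty$ to obtain
\begin{align*}
&\Bigl|\int^{-t}_{a\vee(-t+\varepsilon_n)}F_{t+\varepsilon_n}(s)\,dB_2(s)-\int^{-t}_{a\vee(-t+\varepsilon_n)}F_{t+\varepsilon_n}(s)\,dZ_2^{(n)}(s)\Bigr|\\
&\leq \|B_2-Z_2^{(n)}\|_\infty\Bigl[|F_{t+\varepsilon_n}(-t)|+|F_{t+\varepsilon_n}(a\vee(-t+\varepsilon_n))|+\int^{-t}_{a\vee(-t+\varepsilon_n)}|\partial_sF_{t+\varepsilon_n}(s)|\,ds\Bigr].
\end{align*}
The task is then to show each of the three quantities in the bracket is $O(n^\beta)$ uniformly in $t\in[-\varepsilon_n,T]$, in the spirit of Lemmas \ref{lemF22} and \ref{lemJ22}.

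For the first term, $F_{t+\varepsilon_n}(-t)=(-\varepsilon_n)^{H-1/2}-t^{H-1/2}$, and since the indicator forces $t\geq -\varepsilon_n>0$ while $x\mapsto x^{H-1/2}$ is decreasing for $H<1/2$, this is bounded by $(-\varepsilon_n)^{H-1/2}=n^\beta$. For the second term I would split into the two subcases of the $\vee$: if $a>-t+\varepsilon_n$ the argument is bounded away from $-(t+\varepsilon_n)$ uniformly in $t\in[0,T]$ (because $a<-T$), giving an $O(1)$ bound; if $a\leq -t+\varepsilon_n$ a direct computation gives $|F_{t+\varepsilon_n}(-t+\varepsilon_n)|\leq(-2\varepsilon_n)^{H-1/2}+(t-\varepsilon_n)^{H-1/2}\leq Cn^\beta$. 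For the third term, a sign analysis of $\partial_sF_{t+\varepsilon_n}(s)=(H-1/2)[(-s)^{H-3/2}-(-(t+\varepsilon_n)-s)^{H-3/2}]$ in the regime $-s\geq t>0$ shows that the derivative has constant sign, so the integral equals $|F_{t+\varepsilon_n}(-t)-F_{t+\varepsilon_n}(a\vee(-t+\varepsilon_n))|$ and is therefore also $O(n^\beta)$ by the previous two estimates.

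Combining these bounds yields $J_7\leq P(\|B_2-Z_2^{(n)}\|_\infty>Cn^{-\beta}\alpha_n)=P(\|B_2-Z_2^{(n)}\|_\infty>Cn^{-1/2}(\log n)^{5/2})$, and the conclusion follows from (\ref{eq98}). The main obstacle is the first boundary term: its naive bound involves $t^{H-1/2}$, which blows up as $t\to 0$, so the role of the indicator $I_{\{-\varepsilon_n\leq t\}}$ is precisely to confine $t$ to the regime where this singular factor is controlled by $(-\varepsilon_n)^{H-1/2}=n^\beta$, matching the scale already appearing in the companion Lemma \ref{lemJ22}.
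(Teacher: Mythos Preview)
Your proposal is correct and follows essentially the same route as the paper: integration by parts, the bound by $\|B_2-Z_2^{(n)}\|_\infty$ times the three bracketed quantities, and the observation that each is $O((-\varepsilon_n)^{H-1/2})=O(n^\beta)$. The paper is slightly slicker in that it evaluates the integral of $|\partial_sF_{t+\varepsilon_n}|$ explicitly and observes the lower boundary term cancels, giving the exact identity $|F_{t+\varepsilon_n}(-t)|+|F_{t+\varepsilon_n}(a\vee(-t+\varepsilon_n))|+\int|\partial_sF_{t+\varepsilon_n}|=2\bigl((-\varepsilon_n)^{H-1/2}-t^{H-1/2}\bigr)$, but your term-by-term bound reaches the same conclusion.
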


%\begin{proof}
\noindent
{\it Proof.} By integration by parts,
\vskip.10cm
\begin{align*}
&I_{\left\{-\varepsilon_n\leq t \right\}}\left|
\int^{-t}_{a\vee (-t +\varepsilon_n)} F_{t+\varepsilon_n}(s)dB_2(s)-
\int^{-t}_{a\vee (-t+\varepsilon_n)} F_{t+\varepsilon_n}(s)dZ_2^{(n)}(s)\right|\\
&=\;\;I_{\{-\varepsilon_n \leq t\}}\left|F_{t+\varepsilon_n}(-t) (B_2 (-t)-Z^{(n)}_2 (-t))
\right.\\
&-F_{t+\varepsilon_n} (a\vee (-t+\varepsilon_n))(B_2(a\vee (-t+\varepsilon_n))-Z^{(n)}_2 
(a\vee (-t+\varepsilon_n))\\
&\left.-\int^{-t}_{a\vee (-t+\varepsilon_n)} \partial_s F_{t+\varepsilon_n}(s)(B_2(s)-Z^{(n)}_2 (s))ds\right|\\
&\leq \;\; I_{\{-\varepsilon_n \leq t\}}||B_2 -Z^{(n)}_2 ||_\infty 
\biggl(|F_{t+\varepsilon_n} (-t)|+|F_{t+\varepsilon_n}(a\vee (-t+\varepsilon_n))|\biggr. \left.+ \int^{-t}_{a\vee(-t+\varepsilon_n)} |\partial_s F_{t+\varepsilon_n} (s) |ds\right)
\end{align*}
\begin{align*}
&=\;\;I_{\{-\varepsilon_n \leq t\}} ||B_2 -Z^{(n)}_2 ||_\infty 
\biggl((-\varepsilon_n)^{H-1/2}-(t)^{H-1/2} +(-t-\varepsilon_n-(a\vee (-t+\varepsilon_n)))^{H-1/2}\biggr.\\
&-\;\left. (-(a\vee (-t+\varepsilon_n)))^{H-1/2}+\left(1/2-H\right) \int^{-t}_{a\vee (-t+\varepsilon_n)} [(-s-t-\varepsilon_n)^{H-3/2}-(-s)^{H-3/2}]ds\right)\\
&=\; I_{\{-\varepsilon_n \leq t\}} ||B_2-Z^{(n)}_2 ||_\infty 2\left((-\varepsilon_n)^{H-1/2}-t^{H-1/2}\right)\\
&\leq\;\;2\|B_2-Z^{(n)}_2\|_{\infty}(-\varepsilon_n)^{H-1/2},
\end{align*}
and we have the result similarly as the Lemma \ref{lemF22}. \hfill $\Box$
%\end{proof}

\begin{lemma}
\label{lemK22} For $1/2-H<\beta<1/2$ and each $q>0$,
\begin{align*}
%\label{eqJ} 
&J_8=P\left(\sup_{0\leq t \leq
T}\left|I_{\left\{t< -\varepsilon_n \right\}}\int_{a\vee(-t+\varepsilon_n)}^{-t} F_t(s)dB_2(s) \right|> C\alpha_n \right)=
o(n^{-q}) \ \ \ \text{as} \ \  n\to \infty.
\end{align*}
\end{lemma}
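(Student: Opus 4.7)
The approach will mirror the proof of Lemma \ref{lemD22} ($J_3$): the integrand $F_t(s)=(-t-s)^{H-1/2}-(-s)^{H-1/2}$ has the same difference-of-powers structure as the integrand of $J_3$, with the parameter $t$ now playing the role previously played by $\varepsilon_n$. The overall strategy is therefore a fundamental-theorem-of-calculus rewriting, a Fubini exchange, a H\"older-continuity bound on the resulting Brownian increments, and finally a Chebyshev estimate.

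To begin, I would note that since $a<-T$ and $|\varepsilon_n|\to 0$, for all $n$ sufficiently large one has $a\vee(-t+\varepsilon_n)=-t+\varepsilon_n$ uniformly in $t\in[0,T]$, so the domain collapses to $[-t+\varepsilon_n,-t]$. The indicator $I_{\{t<-\varepsilon_n\}}$ simultaneously forces $t\leq|\varepsilon_n|$, which will be essential for the uniform-in-$t$ bound. Using the fundamental theorem of calculus,
\[
F_t(s)=\int_{-t-s}^{-s}(1/2-H)\,x^{H-3/2}\,dx, \qquad -t+\varepsilon_n\leq s\leq -t,
\]
and I would then apply Fubini exactly as in the proof of Lemma \ref{lemD22}. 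After swapping orders, the stochastic integral becomes a Brownian increment $B_2(s_2(x))-B_2(s_1(x))$, where the range $[s_1(x),s_2(x)]$ splits into three cases depending on whether $x\in[0,t]$, $x\in[t,|\varepsilon_n|]$, or $x\in[|\varepsilon_n|,t+|\varepsilon_n|]$; the respective widths are $x$, $t$, and $t+|\varepsilon_n|-x$.

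The key book-keeping step is the uniform bound $w(x)\leq 2x$ in each case, which uses $t\leq|\varepsilon_n|$ (so in particular $t+|\varepsilon_n|\leq 2|\varepsilon_n|\leq 2x$ whenever $x\geq|\varepsilon_n|$). Once this is in hand, the pathwise H\"older continuity of $B_2$ with exponent $1/2-\gamma$ and random constant $Y$ gives, for $0<\gamma<H$,
\[
\left|\int_{-t+\varepsilon_n}^{-t}F_t(s)\,dB_2(s)\right|\leq C\,Y\int_0^{t+|\varepsilon_n|} x^{H-3/2}(2x)^{1/2-\gamma}\,dx\leq C'\,Y\,|\varepsilon_n|^{H-\gamma},
\]
again invoking $t+|\varepsilon_n|\leq 2|\varepsilon_n|$ to absorb the $t$-dependence.

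To finish, I would use $|\varepsilon_n|^{H-\gamma}=n^{-\beta(H-\gamma)/(1/2-H)}$ to reduce $J_8$ to $P(CY>n^{\kappa}(\log n)^{5/2})$ with $\kappa=\beta(H-\gamma)/(1/2-H)-(1/2-\beta)$. At $\gamma=0$ the hypothesis $\beta>1/2-H$ is precisely what gives $\kappa>0$; hence $\kappa$ remains positive for $\gamma$ sufficiently close to $0$, and Chebyshev's inequality applied with an arbitrarily high moment of $Y$ (exactly as in Lemma \ref{lemG1}) produces the required $o(n^{-q})$ decay. The main obstacle, as in Lemma \ref{lemD22}, is the Fubini reorganization together with the uniform width bound $w(x)\leq 2x$: this is the step that forces the final estimate to be of order $|\varepsilon_n|^{H-\gamma}$, which is exactly what is needed to match the rate $\alpha_n$.
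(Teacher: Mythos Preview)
Your proposal is correct and follows essentially the same route as the paper: fundamental-theorem-of-calculus rewriting of $F_t$, Fubini, a case analysis bounding the Brownian-increment width by a multiple of $x$, integration to $C Y(-\varepsilon_n)^{H-\gamma}$, and the Chebyshev finish from Lemma \ref{lemG1}/\ref{lemD22}. The only cosmetic difference is that you first observe $a\vee(-t+\varepsilon_n)=-t+\varepsilon_n$ for large $n$, which lets you drop $a$ from the case analysis; the paper keeps $a$ throughout and obtains the slightly sharper bound $A_2(x)\leq x$ (versus your $w(x)\leq 2x$), but this has no effect on the argument.
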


\begin{proof} 
By the H\"older continuity of  $B_2$ with $0<\gamma <H$,
\begin{align}
\label{eq152}
&\kern -.45cm \left|I_{\left\{t< -\varepsilon_n \right\}}\int_{a\vee(-t+\varepsilon_n)}^{-t} F_t(s)dB_2(s)\right|\notag\\
=&\;\;\left|I_{\left\{t< -\varepsilon_n \right\}}\int_{a\vee(-t+\varepsilon_n)}^{-t}\int_{-t-s}^{-s}(1/2-H)x^{H-3/2}dxdB_2(s)\right|\notag\\
=&\;\;\left|I_{\left\{t< -\varepsilon_n \right\}}\int_0^{-(a\vee(-t+\varepsilon_n))}\int_{(-t-x)\vee a\vee(-t+\varepsilon_n)}^{(-x)\wedge(-t)}(1/2-H)x^{H-3/2}dB_2(s)dx\right|\notag\\
=&\;\;\biggl|I_{\left\{t< -\varepsilon_n \right\}}\int_0^{-(a\vee(-t+\varepsilon_n))}(1/2-H)x^{H-3/2}[B_2((-x)\wedge(-t))\biggr.\notag\\
&\hspace{7cm}\biggl.-B_2((-t-x)\vee a\vee(-t+\varepsilon_n))]dx\biggr|\notag\\
\leq&\;\;(1/2-H)YI_{\left\{t< -\varepsilon_n \right\}}\int_0^{-(a\vee(-t+\varepsilon_n))}x^{H-3/2}(A_2(x))^{1/2-\gamma}dx
\end{align}
where
$$
%\begin{equation}
%\label{eqA20}
A_2(x)=|((-x)\wedge(-t))-((-t-x)\vee a\vee(-t+\varepsilon_n))|.
%\end{equation}
$$
First, if $-x<-t$ and $-t-x<-t+\varepsilon_n$, then
\begin{equation}
\label{eqA2-1}
A_2(x)=-x+(-a\wedge (t-\varepsilon_n))<t<x.
\end{equation}
If $-x<-t$ and $-t-x\geq-t+\varepsilon_n$, then
\begin{equation}
\label{eqA2-2}
A_2(x)=-x+((t+x)\wedge (-a))<t<x.
\end{equation}
Second, if $-x\geq-t$ and $-t-x<-t+\varepsilon_n$, then
\begin{equation}
\label{eqA2-3}
A_2(x)=-t+(-a\wedge (t-\varepsilon_n))<-\varepsilon_n<x.
\end{equation}
If $-x\geq -t$ and $-t-x\geq-t+\varepsilon_n$, then
\begin{equation}
\label{eqA2-4}
A_2(x)=-t+((t+x)\wedge (-a))<x.
\end{equation}

From (\ref{eqA2-1})-(\ref{eqA2-4}) we have that $A_2(x)\leq x$, then by (\ref{eq152}),
\begin{align*}
\left|I_{\left\{t< -\varepsilon_n \right\}}\int_{a\vee(-t+\varepsilon_n)}^{-t} 
F_t(s)dB_2(s)\right|
\leq &\;\;(1/2-H)YI_{\left\{t< -\varepsilon_n \right\}}
\int_0^{-(a\vee (-t+\varepsilon_n))}x^{H-\gamma-1}dx\\
\leq &\;\;\frac{1/2-H}{H-\gamma}YI_{\left\{t< -\varepsilon_n \right\}}
({t-\varepsilon_n})^{H-\gamma}\\
\leq&\;\;\frac{1/2-H}{H-\gamma}2^{H-\gamma}Y(-\varepsilon_n)^{H-\gamma}.
\end{align*}
Proceeding similary as in Lemma \ref{lemD22} we have the result.
\end{proof}

\begin{lemma}
\label{lemL22} For $1/2-H<\beta<1/2$ and each $q>0$,
\begin{align*}
%\label{eqL22} 
&J_9=P\left(\sup_{0\leq t \leq
T}\left|I_{\left\{-\varepsilon_n \leq t\right\}}\int_{a\vee(-t+\varepsilon_n)}^{-t} [F_t(s)-F_{t+\varepsilon_n}(s)]dB_2(s) \right|> C\alpha_n \right)\nonumber\\
&\;=o(n^{-q}) \ \ \ \text{as} \ \  n\to \infty.
\end{align*}
\end{lemma}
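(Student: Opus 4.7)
The plan is to reduce the estimate for $J_9$ to one of the same form as Lemma \ref{lemD22}. The crucial observation is that, by the definition \eqref{eqdeffunF} of $F_t$, the $-(-s)^{H-1/2}$ terms cancel in the difference, so
\begin{equation*}
F_t(s)-F_{t+\varepsilon_n}(s)=(-t-s)^{H-1/2}-(-t-s-\varepsilon_n)^{H-1/2}=\int_{-t-s}^{-t-s-\varepsilon_n}(1/2-H)x^{H-3/2}\,dx,
\end{equation*}
and this integrand is precisely the one handled in Lemma \ref{lemD22}, after the translation $s\mapsto s+t$. Moreover, because $-\varepsilon_n\to 0$ while $-a>T\ge t$, for all $n$ large enough we have $-t+\varepsilon_n>a$, so the lower endpoint of integration in $J_9$ is simply $-t+\varepsilon_n$ and the $a\vee(\cdot)$ may be discarded (for the finitely many small $n$'s the $o(n^{-q})$ statement is automatic).

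Next I would apply the stochastic Fubini theorem exactly as in Lemmas \ref{lemD22} and \ref{lemJ22}, which yields, for all $n$ large,
\begin{equation*}
I_{\{-\varepsilon_n\le t\}}\int_{-t+\varepsilon_n}^{-t}[F_t(s)-F_{t+\varepsilon_n}(s)]\,dB_2(s)=I_{\{-\varepsilon_n\le t\}}\int_0^{-2\varepsilon_n}(1/2-H)x^{H-3/2}[B_2(U(x))-B_2(L(x))]\,dx,
\end{equation*}
where $L(x)=(-t-x)\vee(-t+\varepsilon_n)$ and $U(x)=(-t-x-\varepsilon_n)\wedge(-t)$. By the pathwise Hölder continuity of $B_2$ with exponent $1/2-\gamma$ for $0<\gamma<H$, the absolute value is dominated by $(1/2-H)Y\int_0^{-2\varepsilon_n}x^{H-3/2}A_3(x)^{1/2-\gamma}dx$, with $A_3(x):=U(x)-L(x)$ and $Y$ a random variable. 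A short case analysis parallel to that in Lemma \ref{lemD22} gives $A_3(x)\le 2x$: for $x\le-\varepsilon_n$ one has $L(x)=-t-x$ and $U(x)=-t$, so $A_3(x)=x$, while for $x>-\varepsilon_n$ one has $L(x)=-t+\varepsilon_n$ and $U(x)=-t-x-\varepsilon_n$, so $A_3(x)=-x-2\varepsilon_n\le 2x$. Integrating $x^{H-1-\gamma}$ on $[0,-2\varepsilon_n]$ then produces the pathwise bound $CY(-\varepsilon_n)^{H-\gamma}$.

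Finally, by \eqref{eqdefepsilon} one has $(-\varepsilon_n)^{H-\gamma}=n^{-\beta(H-\gamma)/(1/2-H)}$. Chebyshev's inequality applied to a sufficiently high moment of $Y$ then yields $J_9=o(n^{-q})$ for every $q>0$, provided the exponent $\kappa=-(1/2-\beta)+\beta(H-\gamma)/(1/2-H)$ is strictly positive; this holds for $\gamma$ taken small enough, because $\beta>1/2-H$ by hypothesis, which is exactly the argument concluding the proof of Lemma \ref{lemD22}. The main technical step is the case analysis for $A_3(x)$ at the junction $x=-\varepsilon_n$ together with the verification that for large $n$ the endpoint $a$ plays no role; everything else is a routine repetition of tools already used in the preceding lemmas.
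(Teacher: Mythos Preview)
Your proof is correct and follows essentially the same approach as the paper: write $F_t(s)-F_{t+\varepsilon_n}(s)$ as $\int_{-t-s}^{-t-s-\varepsilon_n}(1/2-H)x^{H-3/2}\,dx$, interchange the order of integration, bound the Brownian increment by H\"older continuity, and finish with the Chebyshev argument from Lemma~\ref{lemD22}. The only cosmetic differences are that the paper keeps the $a\vee(\cdot)$ throughout (leading to the upper limit $-t-\varepsilon_n+((-a)\wedge(t-\varepsilon_n))$ and to $A_3(x)$ involving an extra $\vee\,a$), whereas you observe that for large $n$ the $a$ is inactive; and the paper records the sharper bound $A_3(x)\le x$ rather than your $A_3(x)\le 2x$, which of course changes nothing.
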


\begin{proof} 
By the H\"older continuity of  $B_2$ with $0<\gamma <H$,
\begin{align}
\label{eq153}
&\kern -.35cm\left|I_{\left\{-\varepsilon_n \leq t \right\}}\int_{a\vee(-t+\varepsilon_n)}^{-t} [F_t(s)-F_{t+\varepsilon_n}(s)]dB_2(s)\right|\notag\\
=&\;\;\left|I_{\left\{-\varepsilon_n \leq t \right\}}\int_{a\vee(-t+\varepsilon_n)}^{-t}\int_{-t-s}^{-t-s-\varepsilon_n}(1/2-H)x^{H-3/2}dxdB_2(s)\right|\notag\\
=&\;\;\biggl|I_{\left\{-\varepsilon_n \leq t \right\}}\int_0^{-t-\varepsilon_n+((-a)\wedge(t-\varepsilon_n))}(1/2-H)x^{H-3/2}[B_2((-t-x-\varepsilon_n)\wedge(-t))\biggr.\notag\\
&\hspace{8cm}\biggl.-B_2((-t-x)\vee a\vee(-t+\varepsilon_n))]dx\biggr|\notag\\
\leq&\;\;(1/2-H)YI_{\left\{-\varepsilon_n \leq t \right\}}\int_0^{-t-\varepsilon_n+((-a)\wedge(t-\varepsilon_n))}x^{H-3/2}(A_3(x))^{1/2-\gamma}dx,
\end{align}
where
%\begin{equation}
%\label{eqA21}
$$
A_3(x)=|(-t-x-\varepsilon_n)\wedge (-t)-((-t-x)\vee (-t+\varepsilon_n)\vee a)|\leq x.
$$
%\end{equation}
Then, by (\ref{eq153}),
\begin{align*}
%\label{eq154}
&\kern -.35cm \left|I_{\left\{-\varepsilon_n \leq t \right\}}\int_{a\vee(-t+\varepsilon_n)}^{-t} [F_t(s)-F_{t+\varepsilon_n}(s)]dB_2(s)\right|\notag\\
\leq&\;\;(1/2-H)YI_{\left\{-\varepsilon_n \leq t \right\}}\int_0^{-t-\varepsilon_n+((-a)\wedge(t-\varepsilon_n))}x^{H-1-\gamma}dx\notag\\
\leq&\;\;Y\frac{1/2-H}{H-\gamma}(-t-\varepsilon_n+((-a)\wedge(t-\varepsilon_n)))^{H-\gamma}\notag\\
\leq&\;\;Y\frac{1/2-H}{H-\gamma}2^{H-\gamma}(-\varepsilon_n)^{H-\gamma}.
\end{align*}
Proceeding similary as in Lemma \ref{lemD22} we have the result.
\end{proof}

\begin{proof}[\textbf{Proof of Theorem \ref{teoaproxy} for}  $\mathbf{H<1/2}$:]

From (\ref{eqdefY}), (\ref{eqdeffunF}) and (\ref{eq:3.4}) we obtain
\begin{align*}
    Y(t)=&\;C\left\{-\int^0_{-t} (-s)^{H-1/2}dB(s) + \int_{a}^{-t} F_t(s)dB(s) + \int_{-\infty}^{a} F_t(s)dB(s)\right\}\\
    =&\;C\left\{-\int^{\varepsilon_n\vee (-t)}_{-t} (-s)^{H-1/2}dB_2(s) -  \int_{\varepsilon_n\vee(-t)}^0 [(-s)^{H-1/2}-(-s-\varepsilon_n)^{H-1/2}]dB_2(s)\right.\\
    &-\int_{\varepsilon_n\vee(-t)}^0 (-s-\varepsilon_n)^{H-1/2}dB_2(s)+\int_{a}^{a\vee(-t+\varepsilon_n)} F_t(s)dB_2(s)\\
    & +I_{\left\{t< -\varepsilon_n \right\}}\int_{a\vee(-t+\varepsilon_n)}^{-t} F_t(s)dB_2(s)+I_{\left\{ -\varepsilon_n \leq t \right\}}\int_{a\vee(-t+\varepsilon_n)}^{-t} [F_t(s)-F_{t+\varepsilon_n}(s)]dB_2(s)\\
&+I_{\left\{ -\varepsilon_n \leq t \right\}}\int_{a\vee(-t+\varepsilon_n)}^{-t} F_{t+\varepsilon_n}(s)dB_2(s)+F_t(a)B_2(a)\\
&\left.-\int_{1/a}^{0}\partial_sF_t\left(\frac{1}{v}\right)\frac{1}{v^3}B_3(v)dv\right\},
\end{align*}
and we have the result similarly as the case $H>1/2$.
\end{proof} 
\noindent
{\bf Acknowledgment}
\vglue .25cm
This work was done with support of CONACyT grant 98998.

\noindent
Johanna Garz\'on\\
Department of Statistics\\
University of Valpara\'{\i}so, Chile\\
{\tt margaret.garzon@uv.cl}\\[.5cm]
Luis G. Gorostiza\\
Department of Mathematics\\
CINVESTAV-IPN, Mexico\\
{\tt lgorosti@math.cinvestav.mx}\\[.5cm]
Jorge A. Le\'on\\
Department of Automatic Control\\
CINVESTAV-IPN, Mexico\\
{\tt jleon@ctrl.cinvestav.mx}
\end{document}